\documentclass{article}
\usepackage [utf8] {inputenc}
\usepackage{url}
\usepackage{mathtools}
\usepackage{amsthm}
\usepackage{amssymb}

\newtheorem{theorem}{Theorem}[section]
\newtheorem{proposition}[theorem]{Proposition}
\newtheorem{lemma}[theorem]{Lemma}
\newtheorem{corollary}[theorem]{Corollary}

\DeclareMathOperator{\X}{\mathfrak{X}}
\DeclareMathOperator{\diam}{diam}
\newcommand{\diamd}{\overrightarrow{\mathrm{diam}}}

\title{On directed and undirected diameters of vertex-transitive graphs}
\author{Saveliy V. Skresanov}
\date{}

\begin{document}
\maketitle

\begin{abstract}
	A directed diameter of a directed graph is the maximum possible distance between
	a pair of vertices, where paths must respect edge orientations, while undirected diameter
	is the diameter of the undirected graph obtained by symmetrizing the edges. In 2006 Babai proved that for a connected directed Cayley graph
	on \( n \) vertices the directed diameter is bounded above by a polynomial in undirected diameter and \( \log n \).
	Moreover, Babai conjectured that a similar bound holds for vertex-transitive graphs. We prove this conjecture of Babai,
	in fact, it follows from a more general bound for connected relations of homogeneous coherent configurations.
	The main novelty of the proof is a generalization of Ruzsa's triangle inequality from additive combinatorics to the setting of graphs.
\end{abstract}

\section{Introduction}

Given a (finite) connected directed graph \( \Gamma \), let \( \diamd(\Gamma) \) denote the directed diameter,
which is the maximum directed distance between a pair of vertices of \( \Gamma \). The undirected diameter \( \diam(\Gamma) \)
is the maximum undirected distance between a pair of vertices, that is, we forget about edge orientations in \( \Gamma \)
when measuring distances. Clearly \( \diam(\Gamma) \leq \diamd(\Gamma) \).

In general, it is not possible to bound the directed diameter in terms of the undirected diameter only,
yet it is interesting how close these two quantities are if the graph is symmetric enough. One of the classical 
examples when the difference between the directed and undirected diameters manifests itself, is the famous
question of Babai about the diameters of Cayley graphs of finite simple groups. Given a finite group \( G \) and its generating set \( S \subseteq G \),
recall that a Cayley graph \( \mathrm{Cay}(G, S) \) is a directed graph with vertex set \( G \), where two vertices \( x, y \in G \) are joined
by a directed edge if \( y = xg \) for some \( g \in S \). Let \( \log n \) denote the base-2 logarithm of \( n \).
Babai proposed the following conjecture:%
\medskip

\noindent
\cite[Conjecture~1.7]{babaiSeress} \emph{Given a nonabelian finite simple group \( G \) generated by \( S \subseteq G \), we have
\[ \diam(\mathrm{Cay}(G, S)) \leq (\log |G|)^C \]
for some universal constant \( C > 0 \).}
\medskip

Although this conjecture is still open, there have been a plethora of results solving the conjecture in some particular cases
and obtaining nontrivial bounds for the diameter (see~\cite{helfgott, pyber, BGT, babaiSeress, BBS}, to name a few).
These works often make heavy use of the fact that we may assume our generating set \( S \) to be closed under taking inverses,
for example, the proof of~\cite{BBS} requires taking commutators of group elements, i.e.\ words of the form \( x^{-1}y^{-1}xy \) for \( x, y \in G \).
Such a word does not correspond to a proper path in the directed Cayley graph, so one has to work with undirected graphs instead.

Luckily, in 2006 Babai proved a result which resolved this issue for Cayley graphs. 

\begin{proposition}[{\cite[Theorem~1.4]{euler}}]\label{babCay}
	Let \( \Gamma \) be a connected Cayley graph on \( n \) vertices. Then \( \diamd(\Gamma) = O(\diam(\Gamma)^2 (\log n)^3) \).
\end{proposition}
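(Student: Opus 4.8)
\emph{Proposal.}
Since $\Gamma = \mathrm{Cay}(G,S)$ is vertex-transitive it suffices to bound directed distances from the identity vertex $e$; write $\overrightarrow{d}(x,y)$ for directed distance, and note that left translation is a graph automorphism, so ``multiplying the current vertex by a fixed $g \in G$'' always costs exactly $\overrightarrow{d}(e,g)$ steps. Let $d = \diam(\Gamma)$ be the undirected diameter; by definition this means $G = (S \cup S^{-1} \cup \{e\})^{d}$, i.e. every $g \in G$ is a product of at most $d$ letters, each a generator, the inverse of a generator, or the identity. Given such an expression for $g$, replacing each backward letter $t^{-1}$ by a forward path from $e$ to $t^{-1}$ and concatenating (using the previous observation) produces a forward walk of length at most $d \cdot \max_{s \in S}\overrightarrow{d}(e,s^{-1})$. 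Hence the whole statement reduces to reversing a single generator cheaply: if $\overrightarrow{d}(e,s^{-1})$ is at most $d$ times a fixed power of $\log n$ for every $s \in S$, then $\diamd(\Gamma)$ is at most $\diam(\Gamma)^{2}$ times a fixed power of $\log n$, which in particular gives the bound in the proposition.

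To reverse a fixed $s \in S$, let $r = \mathrm{ord}(s) \le n$, so that $s^{-1} = s^{r-1}$. Expanding $r-1$ in binary and reaching $s^{r-1}$ by Horner's scheme expresses $s^{-1}$ through $O(\log r) = O(\log n)$ squaring steps $x \mapsto x^{2}$, interleaved with multiplications by $s$ or by $e$. Realizing a squaring $x \mapsto x^{2} = x \cdot x$ naively --- by taking $\overrightarrow{d}(e,x)$ forward steps --- merely reproduces the trivial bound $\overrightarrow{d}(e,s^{-1}) \le r-1$. The crucial idea is to exploit the small undirected diameter: the element $x$ arising at each step lies at undirected distance at most $d$ from $e$, so one can multiply the current vertex by $x$ at a cost of $O(d)$ forward letters plus, for each backward edge $t^{-1}$ occurring along the corresponding undirected walk, the cost of reversing $t$. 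Since there are only $O(\log n)$ squaring steps, this bounds $\overrightarrow{d}(e,s^{-1})$, up to a factor $O(\log n)$, by the cost of turning one undirected word of length $\le d$ into a forward walk.

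The difficulty --- and the technical heart of the proof --- is that reversing a generator has thereby been reduced to reversing generators, so the recursion must be organized so that it terminates and the contributions add rather than multiply. I would do this by induction on $\mathrm{ord}(s)$: generators of small order form the base case, where $\overrightarrow{d}(e,s^{-1}) \le \mathrm{ord}(s)-1$ is already enough, while for $s$ of large order one must arrange that the undirected detours used in reversing $s$ only ever invoke the reversal of elements of strictly smaller (relevant) order --- for instance splitting $s^{-1}$ via the Chinese Remainder Theorem into $O(\log n)$ pieces of prime-power order, and descending into a suitable subgroup or quotient whenever a detour would otherwise force a backward edge of non-smaller order. Checking that this descent is well-founded, and accounting for the contributions (the depth of the recursion, the $O(\log n)$ binary factors at each level, and the factor $d$ from each detour) so that they combine into $d$ times a fixed power of $\log n$, is the step I expect to be the most delicate; it is precisely what upgrades the trivial bound $n-1$ to the bound $O(\diam(\Gamma)^{2}(\log n)^{3})$ claimed above.
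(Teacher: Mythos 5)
Your opening reduction is fine: by left translation, $\diamd(\Gamma)\le \diam(\Gamma)\cdot\max_{s\in S}\overrightarrow{d}(e,s^{-1})$ (up to trivial adjustments), so everything hinges on reversing a single generator at cost $O(\diam(\Gamma)\,(\log n)^{O(1)})$. But that step is exactly where the proposal has a genuine gap, and the gap is structural rather than a missing computation. The binary/Horner scheme is circular as you note: a squaring $x\mapsto x^2$ costs $\overrightarrow{d}(e,x)$, which is the quantity being bounded. Your fix --- realize $x$ by an undirected word of length $\le d$ and pay for each backward letter $t^{-1}$ by recursively reversing $t$ --- reduces ``reverse $s$'' to ``reverse other generators,'' with no mechanism forcing progress: the undirected geodesic for an intermediate element $x$ may use any generators, including $s$ itself or generators of larger order, so induction on $\mathrm{ord}(s)$ has no reason to be well-founded. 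The CRT/subgroup/quotient idea does not repair this: a subgroup or quotient is not an induced subgraph or a Cayley graph on the same generating set, so neither $\diam$ nor the cost of a detour is preserved when you ``descend,'' and nothing in the sketch controls the recursion depth. Even granting termination, the accounting multiplies rather than adds: each of the $O(\log n)$ squarings spawns up to $d$ recursive reversals, so a recursion of depth $D$ gives a bound of order $(d\log n)^{D}$, which is only of the claimed shape if $D=O(1)$ --- and no argument is offered for that. In short, the reduction to reversing one generator does not actually simplify the problem (bounding $\overrightarrow{d}(e,s^{-1})$ by $d\cdot\mathrm{polylog}(n)$ is essentially as hard as the theorem itself), and the proposal stops before the real work.

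For comparison: this proposition is not proved in the present paper but quoted from Babai, whose argument is global rather than generator-by-generator --- it combines the Babai--Erd\H{o}s theorem on fast generating sets (a sequence of $O(\log n)$ group elements whose subproducts cover $G$) with the Babai--Szegedy vertex-expansion bound for vertex-transitive graphs. The present paper's Theorem~\ref{mainS} yields an even stronger bound, $O(\diam(\Gamma)^2(\log n)^2)$, again by a global growth argument: an expansion bound for relations of schemes (Proposition~\ref{expand}) plus the Ruzsa triangle inequality for graphs (Theorem~\ref{ruzsa}) are used to show that the out-neighborhoods $\|a^k\|$ grow by a definite factor every $O(\diam\log n)$ steps. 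If you want to salvage your approach, you would need an independent argument bounding $\overrightarrow{d}(e,s^{-1})$ directly; as it stands, that is precisely the unproved core.
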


In particular, it immediately follows that if \( \diam(\mathrm{Cay}(G, S)) \leq (\log |G|)^C \) for some group \( G \) and a generating set \( S \),
then \( \diamd(\mathrm{Cay}(G, S)) = O((\log |G|)^{2C+3}) \), so it is enough to solve Babai's conjecture for undirected Cayley graphs only.

The proof of Proposition~\ref{babCay} relies on a result of Babai and Erd\H{o}s~\cite{erdos} about fast generating sets of finite groups,
and a theorem of Babai and Szegedy on expansion in vertex-transitive graphs~\cite{szegedy}. Since Cayley graphs are vertex-transitive, it is natural
to ask if a similar bound on the directed diameter holds for vertex-transitive graphs. To this end, Babai makes the following conjecture:%
\medskip

\noindent
\cite[Conjecture~6.1]{euler} \emph{Given a connected vertex-transitive graph \( \Gamma \) on \( n \) vertices, there exists a polynomial bound on \( \diamd(\Gamma) \) in terms of \( \diam(\Gamma) \) and \( \log n \).}
\medskip

In the same paper Babai found such a bound under a condition that the outdegree of \( \Gamma \) is bounded.
If \( k \) is the outdegree of \( \Gamma \), then in~\cite[Theorem~1.1]{euler} it is proved that \( \diamd(\Gamma) = O(\diam(\Gamma) \cdot k \log n) \)
and in~\cite[Corollary~1.2]{euler} that \( \diamd(\Gamma) = O(\diam(\Gamma)^2 \cdot k \log k) \).
If \( \Gamma \) is edge-transitive, then by~\cite[Theorem~1.5]{euler} we have \( \diamd(\Gamma) = O(\diam(\Gamma) \log n) \), so the conjecture holds in this case.

The main result of this paper is the solution of the above conjecture of Babai on vertex-transitive graphs.

\begin{theorem}\label{main}
Let \( \Gamma \) be a connected vertex-transitive graph on \( n \) vertices. Then
\[ \diamd(\Gamma) = O(\diam(\Gamma)^2 (\log n)^2). \]
\end{theorem}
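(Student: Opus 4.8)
The plan is to reduce the vertex-transitive case to an argument about walks in the graph via a combinatorial version of Ruzsa's triangle inequality. Recall that in additive combinatorics, Ruzsa's inequality states that $|A - C| \cdot |B| \leq |A - B| \cdot |B - C|$, which lets one control iterated sumsets. I would set up an analogous statement for sets of walks (or relations) in $\Gamma$: if $A$ denotes the set of endpoints reachable from a fixed vertex $v$ by directed walks of length at most $i$, and similarly for walks of length at most $j$, then the vertex-transitivity should give a submultiplicative-type control on the growth of the "directed balls" $B_i(v) = \{ w : \vec{d}(v,w) \leq i \}$. The key point is that vertex-transitivity (via the automorphism group acting transitively) plays the role that translation-invariance plays in a group, so one can translate a walk realizing $\vec{d}(x,y)$ to start anywhere.

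The main steps I would carry out are as follows. First, formalize the graph-theoretic Ruzsa inequality: show that for the directed ball sizes, something like $|B_{i+j}(v)| \cdot |B_k(v)| \leq |B_{i+k}(v)| \cdot |B_{j+k}(v)|$ holds, or more precisely an inequality comparing the "doubling" of directed balls with the sizes of balls of intermediate radius, using that the undirected diameter being small forces every vertex to be reachable by a short undirected walk. Second, use this to deduce that the directed ball sizes cannot grow too slowly: either $|B_i(v)|$ roughly doubles as $i$ increases by $O(\diam(\Gamma))$ steps (since an undirected walk of length $\diam(\Gamma)$ reaches every vertex, and each "undo an edge" move can be simulated by a bounded detour — this is where the bounded-outdegree argument of Babai breaks down and needs the Ruzsa replacement), or else $B_i(v)$ has already stabilized to the whole vertex set. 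Third, conclude: since $|B_i(v)|$ is bounded by $n$, there can be at most $O(\log n)$ doubling steps, each costing $O(\diam(\Gamma))$ in radius, plus one final phase of length... — carefully accounting, one gets $O(\diam(\Gamma)^2 (\log n)^2)$, where one factor of $\diam(\Gamma)$ and one of $\log n$ come from the doubling count and the per-step cost, and the extra factors come from the fact that the Ruzsa-type step relates balls at scale $k$ rather than giving a clean one-step doubling, so it must be iterated.

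The hard part, I expect, will be proving the graph-theoretic Ruzsa inequality in a form strong enough to drive the induction, and in particular handling the asymmetry between directed and undirected distances. In the group setting Ruzsa's inequality is an essentially one-line injection argument; here one must build the analogous injection between sets of walks while respecting orientations, and the natural map loses information unless one works in the setting of coherent configurations (as the abstract hints), where the relations compose associatively and one has enough regularity to count. So I would phrase the core lemma in terms of connected relations of a homogeneous coherent configuration, prove the Ruzsa-type inequality there using the intersection numbers, and then specialize to the coherent configuration arising from the orbits of $\Aut(\Gamma)$ on pairs of vertices — vertex-transitivity guarantees this configuration is homogeneous, and connectivity of $\Gamma$ guarantees the relevant union of relations is connected. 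The remaining bookkeeping — converting the ball-growth statement into the diameter bound and optimizing the exponents — should be routine once the main inequality is in hand.
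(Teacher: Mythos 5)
Your overall frame---pass to the homogeneous coherent configuration formed by the orbits of \( \Aut(\Gamma) \) on pairs, and prove a Ruzsa-type triangle inequality at the level of relations---is the same as the paper's. But your sketch is missing the second key ingredient, and the step you offer in its place would fail. The engine of the actual proof is a Babai--Szegedy-type \emph{vertex expansion} bound for relations of schemes: for the symmetrization \( b = a \cup a^* \) and any \( t \in S^\cup \) with \( \| t \| \leq n/2 \), taking \( T = \omega t \) one gets \( |\partial_b(T)|/|T| \geq 2/(2\diam(b)+1) \), hence (using \( 1_\Omega \subseteq a \) and \( a \cup a^* \subseteq aa^* \)) \( \| t\,aa^* \| \geq (1 + \tfrac{1}{2d})\| t \| \) with \( d = \diam(a) \). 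This is what converts the hypothesis on the \emph{undirected} diameter into quantitative growth, and nothing in your proposal supplies it. Your substitute idea---that ``each `undo an edge' move can be simulated by a bounded detour''---is false in general: the paper exhibits Cayley graphs over abelian groups with undirected diameter \( 2 \) whose shortest directed cycle is arbitrarily long, so a reversed edge need not admit any short directed simulation. That is exactly why the growth must be run on \( aa^* \) (via expansion) and only then transferred to purely directed powers by the Ruzsa inequality.

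Your accounting also does not produce the stated bound. The expansion bound yields a growth factor \( 1 + \tfrac{1}{2d} \) per multiplication by \( aa^* \), not doubling, so one needs \( O(d \log n) \) growth steps, not \( O(\log n) \); claiming \( O(\log n) \) doublings each costing \( O(\diam(\Gamma)) \) would give \( O(\diam \cdot \log n) \), which is Babai's still-open stronger conjecture. The transfer mechanism you leave vague is concrete in the paper: choose the least \( k \) with \( \| a^{k+1} \| \leq (1+\tfrac{1}{2d})^{1/2}\| a^k \| \) (such \( k = O(d\log n) \) exists, else \( \| a^k \| \) would exceed \( n \)); then Ruzsa with \( b = a^k \), \( c = a^* \) gives \( \| t a^{k+1} \| / \| t \| \geq (1+\tfrac{1}{2d})^{1/2} \) for every \( t \) with \( \| t \| \leq n/2 \), and iterating with \( t = a^{l(k+1)} \) for \( m = O(d\log n) \) steps, plus a final pigeonhole step (\( \| s \| > n/2 \) forces \( ss = \Omega \times \Omega \)), yields \( \diamd(a) \leq 2m(k+1) = O(d^2(\log n)^2) \). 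Finally, the purely directed ball inequality you posit, \( |B_{i+j}|\,|B_k| \leq |B_{i+k}|\,|B_{j+k}| \), does not follow from the relation-level Ruzsa inequality---the right-hand side unavoidably involves mixed-orientation products such as \( (a^k)^* a^j \)---so the argument should be organized around \( \| \cdot \| \) of products of relations, as above, rather than around directed balls alone.
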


Note that in the case of Cayley graphs our result improves Babai's by a factor of \( \log n \).

We derive Theorem~\ref{main} as a corollary of a more general result about homogeneous coherent configurations (also known as association schemes~\cite{BI}).
Namely, in Theorem~\ref{mainS} we prove that the above bound for directed diameter holds when \( \Gamma \) is a connected relation of a homogeneous coherent configuration.
Since a vertex-transitive graph is a relation of a coherent configuration corresponding to its full automorphism group, Theorem~\ref{main} follows at once.

To prove Theorem~\ref{mainS} we employ a vertex expansion bound for undirected relations of homogeneous coherent configurations (Proposition~\ref{expand}),
analogous to the Babai-Szegedy bound~\cite{szegedy} for undirected graphs.
In order to relate the vertex expansion of the original directed graph with expansion of its symmetrization, we introduce the \emph{Ruzsa triangle inequality
for graphs} (Theorem~\ref{ruzsa}), which we think might be of independent interest. To state the result, let \( a, b \subseteq \Omega \times \Omega \) be some relations (or, equivalently,
directed graphs) on the set \( \Omega \) and let \( ab \subseteq \Omega \times \Omega \) denote their product. Let \( b^* \subseteq \Omega \times \Omega \) denote the transposed relation,
and let \( \| a \| \) be the maximum outdegree of a vertex in \( a \) (we give precise definitions of these notions in Sections~\ref{pre} and~\ref{sr}).
If \( a, b, c \subseteq \Omega \times \Omega \) and \( b \) is regular, then
\[ \| ac \| \cdot \| b \| \leq \| ab \| \cdot \| b^*c \|. \]
When \( a, b, c \) are Cayley graphs, this gives the classical Ruzsa inequality for groups~\cite{ruzsaTr}, an indispensable tool in additive combinatorics.
In Section~\ref{sr} we show how this can be used to bound the directed diameter when the underlying coherent configuration
has a certain commutativity condition (Proposition~\ref{commBound}). For instance, if \( \Gamma \) is a connected Cayley graph on \( n \) vertices over an abelian group, then
\[ \diamd(\Gamma) = O(\diam(\Gamma) \log \log n). \]

In Section~\ref{neg} we show that in general it is not possible to bound the directed diameter or directed girth in terms of
the undirected diameter only, even in the case of Cayley graphs over abelian groups. This answers two other questions of Babai~\cite[Problems~6.4 and~6.5]{euler},
and the proof relies on a construction of Haight and Ruzsa~\cite{haight, ruzsa} of special subsets of cyclic groups.

The structure of the paper is as follows. In Section~\ref{pre} we give preliminaries on coherent configurations and
prove a generalization of the Babai-Szegedy bound~\cite{szegedy} to relations of homogeneous coherent configurations (Proposition~\ref{expand}).
In Section~\ref{sr} we prove the Ruzsa inequality for graphs (Theorem~\ref{ruzsa})
and give an application for coherent configurations with a commutativity condition (Proposition~\ref{commBound}).
The main result is proved in Section~\ref{prMain}, and in Section~\ref{neg} we give examples of Cayley graphs with bounded undirected diameter
and unbounded directed diameter and girth.

\section{Preliminaries}\label{pre}

Let \( \Omega \) be a finite set. Given two relations \( a, b \subseteq \Omega \times \Omega \) let
\[ ab = \{ (\alpha, \beta) \in \Omega \times \Omega \mid (\alpha, \gamma) \in a,\, (\gamma, \beta) \in b \text{ for some } \gamma \in \Omega \} \]
denote the product relation. We write \( 1_\Omega = \{ (\alpha, \alpha) \mid \alpha \in \Omega \} \) for the diagonal relation,
and \( a^* = \{ (\beta, \alpha) \in \Omega \times \Omega \mid (\alpha, \beta) \in a \} \) for the transposition of~\( a \).
Note that \( (a^*)^* = a \) and \( (ab)^* = b^* a^* \).

We will view \emph{graphs} on the vertex set \( \Omega \) as relations on \( \Omega \), and will use these two terms interchangeably.
All our graphs are directed by default, and we say that a graph \( a \subseteq \Omega \times \Omega \) is \emph{undirected}, if \( a = a^* \).
If \( a \) is a connected graph on \( \Omega \), we write \( \diamd(a) \) for the \emph{directed diameter} of \( a \), that is,
the largest possible distance between a pair of vertices in \( a \), where paths between vertices must preserve edge orientations.
Similarly, \( \diam(a) \) denotes the \emph{undirected diameter} of \( a \), which is the largest distance between a pair of vertices
where paths are not required to preserve edge orientations; in other words, \( \diam(a) = \diamd(a \cup a^*) \).

Let \( S \) be a partition of \( \Omega \times \Omega \) into relations. We say that a tuple \( \X = (\Omega, S) \)
is a homogeneous coherent configuration or, shortly, a \emph{scheme}, if the following holds~\cite[Definition~2.1.3]{ponom}:
\begin{enumerate}
	\item \( 1_\Omega \in S \),
	\item If \( a \in S \), then \( a^* \in S \),
	\item For any \( a, b, c \in S \) and any \( (\alpha, \beta) \in a \), the number
		\[ |\{ \gamma \in \Omega \mid (\alpha, \gamma) \in b,\, (\gamma, \beta) \in c \}| \]
		does not depend on the choice of \( (\alpha, \beta) \in a \).
\end{enumerate}

The set \( S = S(\X) \) is the set of \emph{basis relations} of \( \X \), and we define \( S^\cup = S^\cup(\X) \) as the set of all unions
of basis relations of \( \X \), in particular, \( S \subseteq S^\cup \). It follows from the definition of a scheme that \( S^\cup \)
is closed under taking products and transpositions of relations~\cite[Proposition~2.1.4]{ponom}. Moreover, if \( a \in S^\cup \) then the indegree and outdegree of
any vertex of \( a \) are the same and do not depend on the vertex~\cite[Definition~2.1.10 and Corollary~2.1.13]{ponom}. In particular, all graphs (relations) from \( S^\cup \) are regular.

Given a transitive permutation group \( G \) on \( \Omega \) one can construct a scheme as follows (see also~\cite[Definition~2.2.3]{ponom}). We set \( \X = (\Omega, S) \)
where \( S \) is the partition of the set \( \Omega \times \Omega \) into orbits of \( G \) in its natural action on pairs; one can easily check that \( \X \) is indeed a scheme.
If \( \Gamma \) is a vertex-transitive graph (for example, a Cayley graph) on \( \Omega \) with the full automorphism group \( G \), then \( \Gamma \) can be partitioned
into orbits of \( G \) and hence \( \Gamma \in S^\cup(\X) \).
\medskip

The following lemma is a generalization of the third property from the definition of a scheme.

\begin{lemma}[{\cite[Exercise~2.7.25]{ponom}}]\label{paths}
	Let \( \X = (\Omega, S) \) be a scheme, and let \( r \), \( r_1, \dots, r_{m-1} \in S \), \( m \geq 2 \)
	and \( (u, w) \in r \). Then the number of tuples \( (v_1, \dots, v_m) \in \Omega^m \) such that \( (v_1, v_m) = (u, w) \) and \( (v_i, v_{i+1}) \in r_i \),
	\( i = 1, \dots, m-1 \) does not depend on the choice of \( (u, w) \in r \).
\end{lemma}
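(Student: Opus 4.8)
The plan is to prove the statement by induction on \( m \), with the inductive step being a direct generalization of the third axiom in the definition of a scheme. The base case \( m = 2 \) is immediate: the only tuple under consideration is \( (v_1, v_2) = (u, w) \) itself, so the count is \( 1 \) if \( (u, w) \in r_1 \) and \( 0 \) otherwise; since \( r \) and \( r_1 \) are basis relations of \( \X \), they are either equal or disjoint, so the count equals \( 1 \) precisely when \( r = r_1 \), which does not depend on the choice of \( (u, w) \in r \).

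For the inductive step, fix \( m \geq 2 \), assume the claim holds for \( m \), and for \( (u, w) \in r \) let \( N(u, w) \) be the number of tuples \( (v_1, \dots, v_{m+1}) \in \Omega^{m+1} \) with \( (v_1, v_{m+1}) = (u, w) \) and \( (v_i, v_{i+1}) \in r_i \) for \( i = 1, \dots, m \). The idea is to cut each such tuple after its \( m \)-th entry: the prefix \( (v_1, \dots, v_m) \) satisfies \( (v_i, v_{i+1}) \in r_i \) for \( i < m \), while the final edge satisfies \( (v_m, w) \in r_m \). For a basis relation \( t \in S \) and a vertex \( \gamma \) with \( (u, \gamma) \in t \), the inductive hypothesis (applied with \( r \) replaced by \( t \)) says that the number of tuples \( (v_1, \dots, v_m) \) with \( (v_1, v_m) = (u, \gamma) \) and \( (v_i, v_{i+1}) \in r_i \) for \( i < m \) is a constant \( q(t) \) depending only on \( t \) (and on the fixed relations \( r_1, \dots, r_{m-1} \)), not on \( \gamma \). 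Grouping the \( (m+1) \)-tuples according to the basis relation \( t \) with \( (u, v_m) \in t \), we obtain
\[ N(u, w) = \sum_{t \in S} q(t) \cdot \big| \{\, \gamma \in \Omega \mid (u, \gamma) \in t,\ (\gamma, w) \in r_m \,\} \big|. \]
By the third scheme axiom, for \( (u, w) \in r \) the inner cardinality depends only on \( r \) (given \( t \) and \( r_m \)), so \( N(u, w) \) depends only on \( r \) as well. This completes the induction.

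I do not expect a serious obstacle here: the argument is essentially routine bookkeeping with intersection numbers. The one point that needs care is formulating the inductive hypothesis at the right level of generality — namely that the count depends on the \emph{basis relation} joining the endpoints of the tuple, rather than on a single fixed pair — so that after the cut one may freely sum over all basis relations \( t \) that can connect \( v_1 \) and \( v_m \). It is also worth observing that the inductive step already covers the passage \( m = 2 \to m = 3 \), where the displayed identity reduces to exactly the third scheme axiom; thus only the trivial base case \( m = 2 \) is genuinely required.
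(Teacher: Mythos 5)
Your induction is correct: the base case, the cut at the last vertex, and the regrouping by the basis relation \( t \) containing \( (u, v_m) \) together with the third scheme axiom give exactly the needed constancy, and you correctly state the inductive hypothesis at the level of an arbitrary basis relation joining the endpoints. The paper itself gives no proof (it cites the statement as an exercise in the Chen--Ponomarenko lectures), and your argument is the standard intersection-number induction one would expect there, so there is nothing to flag.
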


Let \( b \subseteq \Omega \times \Omega \) be an undirected graph on \( \Omega \).
For a subset \( T \subseteq \Omega \) let \( \partial_b(T) \) denote the set of vertices outside of \( T \) adjacent in \( b \) to a vertex in \( T \), i.e.\
\[ \partial_b(T) = \{ \beta \in \Omega \setminus T \mid \alpha \in T,\, (\alpha, \beta) \in b \}. \]
Babai and Szegedy~\cite[Theorem~2.2]{szegedy} showed that for a connected vertex-transitive graph \( b \) and \( T \subseteq \Omega \), \( 0 < |T| \leq |\Omega|/2 \),
the vertex expansion ratio \( |\partial_b(T)|/|T| \) can be bounded from below in terms of \( \diam(b) \).
To prove Theorem~\ref{mainS}, we show that this bound holds in the setting of schemes.
The argument is essentially the same as in~\cite{szegedy}, but requires a few technicalities to count paths between vertices in relations of schemes.

\begin{proposition}\label{expand}
	Let \( \X = (\Omega, S) \) be a scheme, and let \( b \in S^\cup \) be a connected relation with \( b = b^* \).
	For any nonempty subset \( T \) of \( \Omega \) we have
	\[ \frac{|\partial_b(T)|}{|T|} \geq \frac{2(1 - |T|/|\Omega|)}{\diam(b) + |T|/|\Omega|}. \]
	In particular, if \( |T| \leq |\Omega|/2 \), then
	\[ \frac{|\partial_b(T)|}{|T|} \geq \frac{2}{2\diam(b) + 1}. \]
\end{proposition}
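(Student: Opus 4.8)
I would transfer the proof of Babai and Szegedy~\cite{szegedy} to the scheme setting. Their argument for ordinary vertex-transitive graphs is a double count over shortest walks, and the only feature of vertex-transitivity it uses is a homogeneity of walk counts: for a fixed length, the number of walks joining two vertices depends only on the orbit of the pair of endpoints, and hence the metric ball of a given radius around a vertex has vertex-independent size. Both facts have direct analogues in a scheme, which I would record first. For \( k\ge 0 \) put \( b^{(k)} = 1_\Omega\cup b\cup b^2\cup\cdots\cup b^k \); this relation lies in \( S^\cup \), and since \( b \) is connected one has \( b^{(k)} = \Omega\times\Omega \) exactly when \( k\ge\diam(b) \). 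As every relation in \( S^\cup \) is regular, the ball \( \{\beta\in\Omega : \mathrm{dist}_b(\alpha,\beta)\le k\} = \{\beta : (\alpha,\beta)\in b^{(k)}\} \) has size independent of \( \alpha \); in particular \( \mathrm{dist}_b(\alpha,\beta) \) is determined by the basis relation containing \( (\alpha,\beta) \). Applying Lemma~\ref{paths} to strings of basis relations summing to \( b \) shows, moreover, that for each \( k \) the number of walks of length \( k \) in \( b \) from \( \alpha \) to \( \beta \) depends only on the basis relation of \( (\alpha,\beta) \); in particular every vertex is the initial vertex of exactly \( (\deg b)^k \) walks of length \( k \), and, since \( b = b^* \), the terminal vertex of exactly \( (\deg b)^k \) of them.

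Granting these, I would run the Babai--Szegedy count. Fix a nonempty \( T\subseteq\Omega \), write \( n = |\Omega| \) and \( d = \diam(b) \), and for \( 0\le i\le d \) set \( B_i = \{v\in\Omega : \mathrm{dist}_b(v,T)\le i\} \); then \( B_0 = T \), \( B_d = \Omega \), \( B_{i+1}\setminus B_i = \partial_b(B_i) \), and in particular \( |\partial_b(T)| = |B_1| - |T| \). For each basis relation \( c \) I would estimate the number of ordered pairs \( (\alpha,\beta)\in c \) with \( \alpha\in T \) and \( \beta\notin T \): writing such a pair together with a shortest \( b \)-walk joining \( \alpha \) to \( \beta \), recording where this walk first crosses out of the nested sets \( B_0\subseteq B_1\subseteq\cdots \), and counting the remaining walk segments via Lemma~\ref{paths}, one bounds this number in terms of the boundary of \( T \) and of \( \diam(b) \). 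Since the basis relations partition \( \Omega\times\Omega \), summing these estimates over all \( c \) contributes \( |T|(n - |T|) \) on one side, and --- after the edge boundary of \( T \) is converted back to its vertex boundary using that \( b \) is regular --- the first displayed inequality on the other. The second inequality then follows at once, since \( \mu\mapsto\frac{2(1-\mu)}{d+\mu} \) is decreasing and \( |T|/|\Omega|\le\tfrac12 \).

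I expect the main obstacle to be the bookkeeping rather than the idea. In an ordinary graph the walk counts along geodesics are all \( 1 \), so these multiplicities never appear; in a scheme they are vertex-independent but in general larger than \( 1 \), and they have to be carried through the decomposition of a shortest walk and cancelled against the powers \( (\deg b)^k \) that enter when truncated walk segments are completed arbitrarily at either end. Done carelessly this loses a constant factor and gives only a bound of order \( (1 - |T|/|\Omega|)/\diam(b) \); recovering the factor \( 2 \) in the statement is exactly where the argument of~\cite{szegedy} must be followed with care rather than paraphrased.
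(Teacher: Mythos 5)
Your scheme-theoretic preliminaries are correct and are exactly the inputs the paper needs: since \(b^{(k)}=1_\Omega\cup b\cup\dots\cup b^k\in S^\cup\) and every relation in \(S^\cup\) is regular, the distance in \(b\) between \(\alpha\) and \(\beta\) is determined by the basis relation containing \((\alpha,\beta)\) (the paper gets this from~\cite[Theorem~2.6.7]{ponom}), and Lemma~\ref{paths} makes counts of geodesics depend only on that relation. The problem is that the heart of the proposition --- the double count that actually produces the stated inequality with its constant --- is not carried out, and you concede as much. Your sketch (decompose a shortest walk at its first exit from the nested balls \(B_i\), complete truncated segments arbitrarily, cancel against \((\deg b)^k\), then convert an edge boundary back into a vertex boundary) is not the Babai--Szegedy argument; as you yourself note, in the form described it only yields a bound of order \((1-|T|/|\Omega|)/\diam(b)\), it counts only the pairs in \(T\times(\Omega\setminus T)\), and the edge-to-vertex conversion threatens to introduce a factor of \(\deg(b)\) that the statement does not contain. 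Deferring these points to ``following~\cite{szegedy} with care'' leaves the actual proof missing.

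Concretely, what is absent is the weighted load argument. Let \(p(x,y)\) be the number of geodesics from \(x\) to \(y\) and give each geodesic weight \(1/p(x,y)\); let \(P_z\) be the total weight of geodesics through a fixed vertex \(z\). Lemma~\ref{paths} shows that \(p(x,y)\), and the count \(p(x,z)p(z,y)\) of geodesics through \(z\) when \(d(x,z)+d(z,y)=d(x,y)\), depend only on the basis relations involved; applying it once more to the closed configuration \(z,x,y,z\) (summing over triples of basis relations) shows \(P_z=P\) is independent of \(z\), whence \(nP=\sum_{x,y}(d(x,y)+1)\le n^2(\diam(b)+1)\). The factor \(2\) and the \(|T|/|\Omega|\) term come from the choice of the pair set \(K=\bigl(T\times(\Omega\setminus T)\bigr)\cup\bigl((\Omega\setminus T)\times(T\cup\partial_b(T))\bigr)\): every geodesic joining such a pair meets \(\partial_b(T)\), so \(|K|=(n-|T|)(2|T|+|\partial_b(T)|)\le|\partial_b(T)|\cdot P\), and rearranging gives exactly the claimed bound. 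Note that the \(1/p(x,y)\) weighting is precisely what makes the multiplicities you worry about harmless --- no completion of truncated walks, no cancellation against \((\deg b)^k\), and no passage through the edge boundary is needed. So your plan points in the right direction and supplies the right homogeneity facts, but the decisive counting step that gives the stated constant is not there.
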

\begin{proof}
	We may view \( b \) as an undirected graph on \( \Omega \). For \( x, y \in \Omega \)
	let \( d(x, y) \) denote the distance between \( x \) and \( y \) in \( b \).
	A path \( x_0, x_1, \dots, x_m \) in \( b \) will be called a geodesic if \( m = d(x_0, x_m) \).
	Note that a geodesic is an ordered sequence of points, so \( x_m, \dots, x_1, x_0 \) is a different geodesic.

	By~\cite[Theorem~2.6.7]{ponom} the distance-\( i \) graph \( \{ (u, v) \in \Omega \times \Omega \mid d(u, v) = i \} \)
	lies in \( S^{\cup} \), in particular, the distance between points \( x \) and \( y \) depends
	only on the relation where \( (x, y) \) lies.

	Let \( p(x, y) \) denote the number of geodesics between \( x \) and \( y \). If \( z \) is some vertex, then
	we claim that the number
	\[ P_z = \sum_{x, y \in V} \frac{1}{p(x, y)} |\{ L \mid L \text{ is a geodesic from } x \text{ to } y \text{ passing through } z \}| \]
	does not depend on the choice of \( z \in \Omega \). Indeed, by Lemma~\ref{paths}, the number \( p(x, y) \) depends only on the relation
	where \( (x, y) \) lies. The number of geodesics from \( x \) to \( y \) passing through \( z \) can be expressed as
	\[
		N_z(x, y) = 
		\begin{cases}
			p(x, z) \cdot p(z, y), & \text{ if } d(x, z) + d(z, y) = d(x, y),\\
			0, & \text{ otherwise.}
		\end{cases}
	\]
	By the paragraph above, this number depends only on the relations where \( (x, z) \), \( (z, y) \) and \( (x, y) \) lie.
	Therefore we have the following formula:
	\[ P_z = \sum_{r, s, t \in S} \sum_{\substack{x, y \in \Omega,\\(x, z) \in r, (z, y) \in s, (x, y) \in t}} \frac{1}{p(x, y)} N_z(x, y). \]
	By applying Lemma~\ref{paths} to the path \( z, x, y, z \), we see that the number of tuples \( (x, y) \in t \) satisfying
	\( (x, z) \in r \), \( (z, y) \in s \) does not depend on the choice of \( z \). Hence \( P = P_z \) does not depend on the choice of \( z \), as claimed.
	
	Set \( n = |\Omega| \). We claim that
	\[ n \cdot P = \sum_{x, y \in \Omega} (d(x, y)+1). \]
	Since \( P \) does not depend on the choice of \( z \in \Omega \) we have
	\begin{multline*}
		n \cdot P = \sum_{z \in \Omega} \sum_{x, y \in \Omega} \frac{1}{p(x, y)} |\{ L \mid L \text{ is a geodesic from } x \text{ to } y \text{ passing through } z \}|\\
		= \sum_{x, y \in \Omega} \frac{1}{p(x, y)} \sum_{z \in \Omega} |\{ L \mid L \text{ is a geodesic from } x \text{ to } y \text{ passing through } z \}|.
	\end{multline*}
	There are \( d(x,y)+1 \) vertices in a geodesic from \( x \) to \( y \), and there are \( p(x, y) \) such geodesics, hence
	\begin{multline*}
		\sum_{x, y \in \Omega} \frac{1}{p(x, y)} \sum_{z \in \Omega} |\{ L \mid L \text{ is a geodesic from } x \text{ to } y \text{ passing through } z \}|\\
		= \sum_{x, y \in \Omega} \frac{1}{p(x, y)} \cdot (d(x, y)+1)p(x, y) = \sum_{x, y \in \Omega} (d(x,y)+1),
	\end{multline*}
	which proves the claim.

	Now we are ready to prove the expansion bound. Let \( T \) be a nonempty subset of \( \Omega \).
	Consider a set of pairs
	\[ K = (T \times (\Omega \setminus T)) \cup ((\Omega \setminus T) \times (T \cup \partial_b(T))). \]
	A path between \( T \) and \( \Omega \setminus T \) must intersect \( \partial_b(T) \), hence
	\[ |K| =
	\sum_{(x, y) \in K} \frac{1}{p(x, y)} |\{ L \mid L \text{ is a geodesic from } x \text{ to } y \text{ through } \partial_b(T)\}|. \]
	We extend summation to the whole of \( \Omega \) and derive
	\begin{multline*}
		|K| \leq
		\sum_{x, y \in \Omega} \frac{1}{p(x, y)} |\{ L \mid L \text{ is a geodesic from } x \text{ to } y \text{ through } \partial_b(T)\}|\\
		\leq \sum_{x, y \in \Omega} \frac{1}{p(x, y)} \sum_{z \in \partial_b(T)} |\{ L \mid L \text{ is a geodesic from } x \text{ to } y \text{ through } z\}| \\
		= \sum_{z \in \partial_b(T)} \sum_{x, y \in \Omega} \frac{1}{p(x, y)} |\{ L \mid L \text{ is a geodesic from } x \text{ to } y \text{ through } z\}| \\
		= \sum_{z \in \partial_b(T)} P = |\partial_b(T)| \cdot P = \frac{|\partial_b(T)|}{n} \sum_{x, y \in \Omega} (d(x, y)+1) \leq n \cdot |\partial_b(T)| \cdot (\diam(b)+1).
	\end{multline*}
	To compute the size of \( K \), recall that \( T \) and \( \partial_b(T) \) are disjoint, hence
	\[ |K| = |T \times (\Omega \setminus T)| + |(\Omega \setminus T) \times (T \cup \partial_b(T))| = (n-|T|)(2|T| + |\partial_b(T)|). \]
	After substituting in the inequality above and dividing by \( n|T| \) we derive
	\[ \left(1 - \frac{|T|}{n}\right)\left(2 + \frac{|\partial_b(T)|}{|T|}\right) \leq \frac{\partial_b(T)}{|T|}(\diam(b)+1). \]
	The claimed result follows after rearranging terms in the inequality.
\end{proof}

We note that a similar bound for the edge expansion of basis relations of coherent configurations was proved in~\cite[Proposition~2.8]{pybSkr}.

\section{Ruzsa inequality for graphs}\label{sr}

One of the central tools in additive combinatorics is the Ruzsa triangle inequality~\cite{ruzsaTr}.
Let \( G \) be an arbitrary group. Given subsets \( A, B \subseteq G \) let \( AB = \{ ab \mid a \in A, b \in B \} \)
denote the product set and let \( A^{-1} = \{ a^{-1} \mid a \in A \} \) be the inverse set.
If \( A, B, C \subseteq G \) are finite subsets, then the Ruzsa triangle inequality claims that
\[ |AC| \cdot |B| \leq |AB| \cdot |B^{-1}C|. \]
The naming stems from the fact that Ruzsa's inequality implies that the function
\[ d(A, B) = \log\frac{|AB^{-1}|}{(|A|\cdot|B|)^{1/2}} \]
obeys the usual triangle inequality \( d(A, C) \leq d(A, B) + d(B, C) \).

The proof of this beautiful result is rather simple: we construct an injective mapping \( f : AC \times B \to AB \times B^{-1}C \)
in the following manner. For all \( x \in AC \) fix some decomposition \( x = a_xc_x \), \( a_x \in A \), \( c_x \in C \).
Now for \( b \in B \) set \( f(x, b) = (a_xb,\, b^{-1}c_x) \). This map is indeed injective, since if \( f(x, b) = (y, z) \)
then \( x = a_xc_x = (a_xb)(b^{-1}c_x) = yz \), and \( b = (a_x)^{-1}y \); in other words, one can recover \( x \) and \( b \)
from \( y \) and \( z \).

We will now show that essentially the same argument applies to finite graphs. Let \( \Omega \) be some finite set.
Given a relation \( a \subseteq \Omega \times \Omega \) and \( \omega \in \Omega \),
let \( \omega a = \{ \beta \in \Omega \mid (\omega, \beta) \in a \} \) denote the neighborhood of \( \omega \) in \( a \).
For a subset \( T \subseteq \Omega \) we similarly define \( Ta = \{ \beta \in \Omega \mid (\alpha, \beta) \in a,\, \alpha \in T \} \).

To make our results visually closer to analogous statements from additive combinatorics,
we will write \( \| a \| \) for the maximum outdegree of a relation \( a \subseteq \Omega \times \Omega \), i.e.\ \( \| a \| = \max_{\omega \in \Omega} |\omega a| \).
For any relations \( a, b \subseteq \Omega \times \Omega \) we trivially have \( \| ab \| \leq \| a \| \cdot \| b \| \).
We say that the relation \( a \) is \emph{regular}, if \( |\omega a| \) does not depend on the choice of \( \omega \in \Omega \).

\begin{theorem}[Ruzsa triangle inequality for graphs]\label{ruzsa}
	Let \( \Omega \) be a finite set, and let \( a, b, c \subseteq \Omega \times \Omega \).
	If \( b \) is regular, then
	\[ \| ac \| \cdot \| b \| \leq \| ab \| \cdot \| b^*c \|. \]
\end{theorem}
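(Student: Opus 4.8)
The plan is to mimic the injective-map proof of the classical Ruzsa inequality recalled just above, replacing the step ``fix a decomposition \( x = a_x c_x \)'' by ``fix a midpoint of an \( (a,c) \)-path''. First I would choose a vertex \( \omega \in \Omega \) with \( |\omega(ac)| = \| ac \| \), and for every \( \beta \in \omega(ac) \) fix, once and for all, a vertex \( \gamma_\beta \in \Omega \) with \( (\omega, \gamma_\beta) \in a \) and \( (\gamma_\beta, \beta) \in c \); such a \( \gamma_\beta \) exists by the definition of the product relation \( ac \). This \( \gamma_\beta \) plays the role of the chosen decomposition of \( x \) in the group case.

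Next I would introduce the domain
\[ D = \{ (\beta, \delta) \mid \beta \in \omega(ac),\ \delta \in \gamma_\beta b \}. \]
Since \( b \) is regular, \( |\gamma_\beta b| = \| b \| \) for every \( \beta \), and as the sets \( \{\beta\} \times \gamma_\beta b \) are pairwise disjoint we get \( |D| = |\omega(ac)| \cdot \| b \| = \| ac \| \cdot \| b \| \). I would then define \( f \colon D \to \Omega \times \Omega \) by the ``swap'' \( f(\beta, \delta) = (\delta, \beta) \), which is patently injective. It remains to check that the image of \( f \) is small. If \( \delta \in \gamma_\beta b \), then from \( (\omega, \gamma_\beta) \in a \) and \( (\gamma_\beta, \delta) \in b \) we obtain \( (\omega, \delta) \in ab \), i.e.\ \( \delta \in \omega(ab) \); and from \( (\delta, \gamma_\beta) \in b^* \) together with \( (\gamma_\beta, \beta) \in c \) we obtain \( (\delta, \beta) \in b^*c \), i.e.\ \( \beta \in \delta(b^*c) \). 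Hence
\[ f(D) \subseteq E := \{ (\delta, \beta) \mid \delta \in \omega(ab),\ \beta \in \delta(b^*c) \}, \]
and \( |E| \leq |\omega(ab)| \cdot \max_{\delta \in \Omega} |\delta(b^*c)| \leq \| ab \| \cdot \| b^*c \| \). Combining the three displays, \( \| ac \| \cdot \| b \| = |D| = |f(D)| \leq |E| \leq \| ab \| \cdot \| b^*c \| \), as desired.

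I do not expect a genuine obstacle: the argument is essentially the group-theoretic one transported verbatim, with paths in relations in place of products of elements. The one place that needs care is the use of regularity of \( b \): it is exactly what forces \( |\gamma_\beta b| = \| b \| \) rather than merely \( |\gamma_\beta b| \leq \| b \| \), and hence makes \( |D| = \| ac \| \cdot \| b \| \) an equality, pushing the inequality in the correct direction. I would also double-check that the fixed choice \( \gamma_\beta \) depends on \( \beta \) only (not on \( \delta \)), so that \( D \) really is a disjoint union over \( \beta \) and \( f \) is well defined — this is the analog of fixing the decompositions \( x = a_x c_x \) before defining the Ruzsa map.
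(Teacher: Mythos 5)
Your proof is correct and is essentially the paper's argument: fix a vertex \( \omega \) attaining \( \| ac \| \), fix a midpoint \( \gamma_\beta \) of an \( (a,c) \)-path for each \( \beta \in \omega(ac) \) (the analog of the fixed decomposition \( x = a_x c_x \)), use regularity of \( b \) to get exactly \( \| b \| \) choices of \( \delta \), and inject the resulting pairs into \( \omega(ab) \times {}\)(\( b^*c \)-neighborhoods). The only difference is cosmetic: you package the map as the trivially injective swap \( (\beta,\delta) \mapsto (\delta,\beta) \), while the paper indexes the same data by \( (i,j) \) and checks injectivity by hand.
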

\begin{proof}
	Fix a point \( \omega \in \Omega \) such that \( \| ac \| = |\omega ac| \).
	Set \( m = \| ac \| \) and \( k = \| b \| \). Then \( m = |\omega ac| \), so \( \{ \gamma_1, \dots, \gamma_m \} = \omega ac \)
	for some necessarily distinct points \( \gamma_i \). Choose points \( \alpha_i \), \( i = 1, \dots, m \),
	such that \( (\omega, \alpha_i) \in a \) and \( (\alpha_i, \gamma_i) \in c \).
	For each \( i \in \{ 1, \dots, m \} \) choose \( k \) distinct points \( \beta_{i1}, \dots, \beta_{ik} \)
	such that \( (\alpha_i, \beta_{ij}) \in b \) for all \( i = 1, \dots, m \), \( j = 1, \dots, k \).

	Define a map \( f : \{ 1, \dots, m \} \times \{ 1, \dots, k \} \to \Omega \times \Omega \)
	by the rule \( f(i, j) = (\beta_{ij}, \gamma_i) \).
	Let \( I = \{ f(i, j) \mid i = 1, \dots, m,\, j = 1, \dots, k \} \) be the image of that map.
	Then
	\[ I = \bigcup_{\beta \in \omega ab} \{ f(i, j) \mid \beta_{ij} = \beta,\, i = 1, \dots, m,\, j = 1, \dots, k \}. \]
	Observe that sets in that union are disjoint, therefore
	\[ |I| = \sum_{\beta \in \omega ab}  |\{ \gamma_i \mid \beta_{ij} = \beta,\, i = 1, \dots, m,\, j = 1, \dots, k \}|. \]
	Since \( (\beta_{ij}, \alpha_i) \in b^* \) and \( (\alpha_i, \gamma_i) \in c \), we have \( \gamma_i \in \beta_{ij}b^*c \).
	Hence
	\[ |I| \leq \sum_{\beta \in \omega ab}  |\{ \gamma_i \in \beta b^*c \mid i = 1, \dots, m \}| \leq \sum_{\beta \in \omega ab} \| b^*c \|
	   \leq \| ab \| \cdot \| b^*c \|. \]

	Now we show that \( f \) is injective. Suppose \( f(i, j) = f(i', j') \) for some \( i \in \{ 1, \dots, m \} \)
	and \( j \in \{ 1, \dots, k \} \). Then \( \beta_{ij} = \beta_{i'j'} \) and \( \gamma_i = \gamma_{i'} \).
	Last equality implies \( i = i' \), so \( \beta_{ij} = \beta_{ij'} \) and \( j = j' \) follows.

	Since \( f \) is injective, we have \( m \cdot k \leq |I| \) which is the desired inequality.
\end{proof}

As all relations of a scheme are regular, we have the following immediate corollary.

\begin{corollary}[Ruzsa triangle inequality for schemes]
	Let \( \X \) be a scheme and \( a, b, c \in S^\cup(\X) \). Then
	\[ \| ac \| \cdot \| b \| \leq \| ab \| \cdot \| b^*c \|. \]
\end{corollary}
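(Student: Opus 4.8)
The plan is to observe that this corollary is nothing more than a direct specialization of Theorem~\ref{ruzsa}, so the entire task reduces to verifying that theorem's single hypothesis — namely, that the middle relation \( b \) is regular. I would first invoke the structural facts recalled in Section~\ref{pre}: for any \( a \in S^\cup(\X) \) the indegree and outdegree of every vertex of \( a \) coincide and are independent of the vertex (this is \cite[Definition~2.1.10 and Corollary~2.1.13]{ponom}), so in particular every relation in \( S^\cup(\X) \) is regular in the sense defined just before Theorem~\ref{ruzsa}, i.e.\ \( |\omega b| \) does not depend on \( \omega \in \Omega \).

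Having checked this for our given \( b \in S^\cup(\X) \), I would simply apply Theorem~\ref{ruzsa} to the triple \( a, b, c \subseteq \Omega \times \Omega \): the theorem is stated for arbitrary relations on a finite set, and a scheme supplies us precisely with relations on the finite set \( \Omega \), so the hypotheses are met and we obtain \( \| ac \| \cdot \| b \| \leq \| ab \| \cdot \| b^*c \| \) at once. If one wishes to phrase the inequality entirely within the scheme, one can additionally note that \( S^\cup(\X) \) is closed under products and transpositions, so that \( ac \), \( ab \), \( b^* \) and \( b^*c \) all lie in \( S^\cup(\X) \) as well; but this closure plays no role in the statement, which involves only the out-degree functional \( \| \cdot \| \).

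There is essentially no obstacle here: all the combinatorial content is carried by Theorem~\ref{ruzsa}, and the only point requiring care is citing the correct fact about homogeneous coherent configurations that guarantees regularity of basis relations (and hence of their unions). The proof will therefore consist of a single short paragraph reducing the statement to Theorem~\ref{ruzsa} via the regularity of elements of \( S^\cup(\X) \).
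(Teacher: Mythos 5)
Your proposal is correct and matches the paper's argument exactly: the paper also derives the corollary immediately from Theorem~\ref{ruzsa} by noting that every relation in \( S^\cup(\X) \) is regular (the fact recalled in Section~\ref{pre} from \cite[Definition~2.1.10 and Corollary~2.1.13]{ponom}). Nothing further is needed.
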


To illustrate how this can be used, we apply Ruzsa's inequality to bound the directed diameter
in schemes with a certain commutativity condition.

\begin{lemma}\label{commInd}
	For a scheme \( \X \) and \( a, b \in S^\cup(\X) \) we have
	\[ \| aa^* \|^{1/2} \cdot \| b \|^{1/2} \leq \| ab \|. \]
\end{lemma}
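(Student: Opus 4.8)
The plan is to obtain Lemma~\ref{commInd} as a direct instance of the Ruzsa triangle inequality for schemes (the corollary to Theorem~\ref{ruzsa}), exploiting the fact that every relation in \( S^\cup(\X) \) is regular with equal in- and outdegrees. So this is really a one-line deduction once the right substitution is found.

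First I would recall that, since \( S^\cup(\X) \) is closed under products and transpositions, both \( a^* \) and \( ab \) lie in \( S^\cup(\X) \). I would then apply the Ruzsa inequality for schemes to the triple \( (a, b, a^*) \) in the roles of \( (a, b, c) \), which yields
\[ \| aa^* \| \cdot \| b \| \leq \| ab \| \cdot \| b^* a^* \|. \]

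Next I would simplify the last factor. Using \( (ab)^* = b^* a^* \), the quantity \( \| b^* a^* \| \) is the maximum outdegree of \( (ab)^* \), which equals the maximum indegree of \( ab \). Since \( ab \in S^\cup(\X) \), the indegree and outdegree of every vertex of \( ab \) coincide and are independent of the vertex, so \( \| b^* a^* \| = \| ab \| \). Substituting this into the displayed inequality gives \( \| aa^* \| \cdot \| b \| \leq \| ab \|^2 \), and taking square roots of both sides yields the claim.

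There is essentially no serious obstacle here; the only point requiring care is the identification \( \| b^* a^* \| = \| ab \| \), which fails for arbitrary relations on \( \Omega \) and relies specifically on the regularity — indeed, on the equality of in- and outdegrees — of relations belonging to \( S^\cup \) of a scheme.
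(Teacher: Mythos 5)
Your proposal is correct and matches the paper's own argument: both apply the Ruzsa inequality for schemes to the triple \( (a, b, a^*) \) and then use \( \| b^* a^* \| = \| (ab)^* \| = \| ab \| \), justified by the regularity (equal in- and outdegrees) of relations in \( S^\cup(\X) \), before taking square roots. No gaps; this is essentially the same one-line deduction as in the paper.
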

\begin{proof}
	Ruzsa's inequality gives
	\( \| aa^* \| \cdot \| b \| \leq \| ab \| \cdot \| b^*a^* \| = \| ab \|^2 \),
	where the last equality uses the fact that \( \| c \| = \| c^* \| \) for any \( c \in S^\cup(\X) \).
\end{proof}

Note that if \( a \) is a connected relation from \( S^\cup(\X) \) for some scheme \( \X \) on \( \Omega \), then \( \diamd(a) \) is equal to the smallest \( k \) such that
\[ (a \cup 1_\Omega)^k = \underbrace{(a \cup 1_\Omega) \cdot \dots \cdot (a \cup 1_\Omega)}_{k \text{ times}} = \Omega \times \Omega. \]
Here we used that \( 1_\Omega \) acts as a multiplicative identity, i.e.\ \( a 1_\Omega = 1_\Omega a = a \).
In its turn, \( \diam(a) \) is equal to the smallest \( k \) such that \( (a \cup a^* \cup 1_\Omega)^k = \Omega \times \Omega \).

Observe also that if \( a \in S^\cup(\X) \) is such that \( \| a \| > n/2 \) where \( n = |\Omega| \), then \( \| a^* \| > n/2 \),
and \( aa = \Omega \times \Omega \) by pigeonhole principle.

\begin{proposition}\label{commBound}
	Let \( \X \) be a scheme on \( n \) points and let \( a \in S^\cup(\X) \) be a connected relation such that \( aa^* = a^*a \).
	Then \( \diamd(a) = O(\diam(a) \log \log n) \).
\end{proposition}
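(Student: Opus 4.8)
The plan is to use the commutativity hypothesis to reduce the statement to a clean \emph{doubling} problem about a single relation, and then to iterate the Ruzsa inequality. Write $n = |\Omega|$, $d = \diam(a)$ and $c = a \cup 1_\Omega \in S^\cup(\X)$, so that, by the remarks preceding the proposition, $\diamd(a)$ is the least $k$ with $c^k = \Omega \times \Omega$, while $(c \cup c^*)^d = \Omega \times \Omega$. First I would record that $cc^* = c^*c$: expanding gives $cc^* = aa^* \cup a \cup a^* \cup 1_\Omega$ and $c^*c = a^*a \cup a \cup a^* \cup 1_\Omega$, which coincide since $aa^* = a^*a$. Because $1_\Omega \subseteq c$ we have $c \cup c^* \subseteq cc^*$, hence $(cc^*)^d \supseteq (c \cup c^*)^d = \Omega \times \Omega$, so $(cc^*)^d = \Omega \times \Omega$. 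Now the identity $cc^* = c^*c$ means that $c$ and $c^*$ generate a commutative submonoid of $(S^\cup(\X), \cdot)$, so $(cc^*)^d = c^d (c^*)^d = c^d (c^d)^*$. Setting $g = c^d \in S^\cup(\X)$, this says exactly that $gg^* = \Omega \times \Omega$. If I can show $g^m = \Omega \times \Omega$ for some $m = O(\log\log n)$, then $c^{dm} = g^m = \Omega \times \Omega$, whence $\diamd(a) \le dm = O(\diam(a) \log\log n)$, which is the claim; so from now on the only input is $gg^* = \Omega \times \Omega$.

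Next I would run the Ruzsa iteration. Since $\|gg^*\| \le \|g\| \cdot \|g^*\| = \|g\|^2$ and $\|gg^*\| = n$, already $\|g\| \ge n^{1/2}$. More importantly, Lemma~\ref{commInd} applied with the relation $g^k$ in place of $b$ gives
\[ \|g^{k+1}\| = \|g \cdot g^k\| \ge \|gg^*\|^{1/2} \|g^k\|^{1/2} = n^{1/2} \|g^k\|^{1/2} \]
for every $k \ge 1$. Dividing, $n / \|g^{k+1}\| \le (n/\|g^k\|)^{1/2}$, so the nonnegative quantity $\log(n/\|g^k\|)$ is at least halved at each step; combined with $\log(n/\|g\|) \le \tfrac12 \log n$ this gives $\log(n/\|g^k\|) \le (\log n)/2^k$, i.e.\ $\|g^k\| \ge n^{1 - 1/2^k}$.

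To finish, for $n \ge 2$ I would take $k_0 = \lfloor \log\log n \rfloor + 1$, so that $2^{k_0} > \log n$ and therefore $n^{1/2^{k_0}} < n^{1/\log n} = 2$, giving $\|g^{k_0}\| > n/2$. By the pigeonhole observation stated just before the proposition, a relation $h \in S^\cup(\X)$ with $\|h\| > n/2$ satisfies $hh = \Omega \times \Omega$; applying this to $h = g^{k_0}$ yields $g^{2k_0} = \Omega \times \Omega$. Hence $m = 2k_0 = O(\log\log n)$ works (the case $n = 1$ being trivial), and the reduction of the first paragraph gives $\diamd(a) \le 2 d k_0 = O(\diam(a) \log\log n)$.

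I expect the only genuinely delicate point to be the reduction in the first paragraph — verifying that $aa^* = a^*a$ forces $cc^* = c^*c$, and that this in turn legitimizes the rearrangement $(cc^*)^d = c^d(c^d)^*$ (which is the elementary fact that two commuting elements of a monoid generate a commutative submonoid, so that words in $c$ and $c^*$ with the same content are equal). Once the problem has been recast as the assertion that $gg^* = \Omega \times \Omega$ forces $g^{O(\log\log n)} = \Omega \times \Omega$, the Ruzsa inequality drives the doubly-exponential convergence $\|g^k\| \ge n^{1-1/2^k}$ and a single pigeonhole step closes the remaining gap, so the rest is routine.
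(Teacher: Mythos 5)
Your proof is correct and follows essentially the same route as the paper: use commutativity to get $c^d(c^d)^* = \Omega \times \Omega$, iterate Lemma~\ref{commInd} to obtain $\|c^{dk}\| \geq n^{1-1/2^k}$, and finish with the pigeonhole step after $O(\log\log n)$ iterations. The only difference is cosmetic: you explicitly verify that passing to $c = a \cup 1_\Omega$ preserves the hypothesis $cc^* = c^*c$, a detail the paper absorbs into its ``we may assume $1_\Omega \subseteq a$'' remark.
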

\begin{proof}
	Set \( k = \diam(a) \). We may always assume that \( 1_\Omega \subseteq a \), so \( (a \cup a^*)^k = \Omega \times \Omega \).
	By expanding the brackets and using commutativity we see that \( a^k (a^*)^k = \Omega \times \Omega \), in particular,
	\( \| a^k (a^*)^k \| = n \). Since \( (a^*)^k = (a^k)^* \) we have \( \| a^k (a^k)^* \| = n \) and thus \( \| a^k \| \geq n^{1/2} \).

	By Lemma~\ref{commInd}, for any \( m \geq 2 \) we have
	\[ \| a^{m \cdot k} \| \geq \| a^k (a^k)^* \|^{1/2} \cdot \| a^{(m-1)k} \|^{1/2} \geq n^{1/2} \cdot  \| a^{(m-1)k} \|^{1/2}, \]
	so by induction we derive
	\[ \| a^{m\cdot k} \| \geq n^{\frac{1}{2} + \frac{1}{4} + \dots + \frac{1}{2^m}} = n^{1 - \frac{1}{2^m}}, \]
	for any positive integer \( m \). Now set \( m = \lceil \log \log n \rceil + 1 \). Then \( \| a^{m \cdot k} \| > n/2 \)
	and hence \( \| (a^{m \cdot k})^* \| > n/2 \), so \( a^{2m \cdot k} = \Omega \times \Omega \). Therefore
	\[ \diamd(a) \leq 2\diam(a) (\lceil \log \log n \rceil + 1), \]
	as claimed.
\end{proof}

A Cayley graph can be viewed as a relation of a certain \emph{Cayley scheme}~\cite[Definition~2.4.1]{ponom}, and the condition
\( aa^* = a^*a \) of the proposition above simply means that the generating set of the Cayley graph must commute with its inverse.
Hence the above proposition applies to Cayley graphs over abelian groups or, more generally, to Cayley graphs with normal connection sets, so we have
a considerable improvement to Babai's bound in these cases. Connected relations of \emph{commutative schemes}~\cite[Section~2.3.1]{ponom} also satisfy the conditions of the proposition.

\section{Proof of the main result}\label{prMain}

Theorem~\ref{main} is a consequence of the following more general result about relations of schemes.

\begin{theorem}\label{mainS}
	Let \( \X = (\Omega, S) \) be a scheme on \( n = |\Omega| \) points, and let \( a \in S^\cup \) be a connected relation. Then
	\[ \diamd(a) = O(\diam(a)^2 (\log n)^2). \]
\end{theorem}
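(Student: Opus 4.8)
The plan is to combine the vertex expansion bound for symmetrized relations (Proposition~\ref{expand}) with the Ruzsa inequality for graphs (Theorem~\ref{ruzsa}) in order to convert control over \(\diam(a)\) into control over \(\diamd(a)\). Write \(b = a \cup a^* \cup 1_\Omega\), so that \(b = b^*\), \(b \in S^\cup\), and \(\diamd(b) = \diam(a) =: D\). The first step is to use Proposition~\ref{expand} to show that the symmetrized relation \(b\) expands well: for any \(T \subseteq \Omega\) with \(|T| \leq n/2\) we have \(|Tb| = |T \cup \partial_b(T)| \geq (1 + c/D)|T|\) for a constant \(c\). Iterating this, after \(O(D \log n)\) multiplications by \(b\) we reach a set of size \(> n/2\), hence \(\| b^{t} \| > n/2\) for some \(t = O(D \log n)\), and then \(b^{2t} = \Omega \times \Omega\), i.e.\ \(\|b^{2t}\| = n\). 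So we have a short \emph{undirected} word that already fills \(\Omega \times \Omega\); the difficulty is that this word uses \(a^*\).

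The second, and main, step is to eliminate the transposes using Ruzsa's inequality. The key point is that if \(b^{t} \subseteq \bigcup_{i+j \leq t} a^i (a^*)^j\) (expand the bracket, absorbing \(1_\Omega\)), then \(\| a^i (a^*)^j \| = n\) for some \(i + j \leq 2t\). One would like to replace \(a^i (a^*)^j\) by \(a^{i+j}\) or by a bounded power of \(a\), but unlike in the commutative case of Proposition~\ref{commBound} we cannot simply swap \(a\) and \(a^*\). Instead the plan is to apply Theorem~\ref{ruzsa} repeatedly to migrate the \(a^*\) factors across. Concretely, with the substitution \(a \mapsto a^p\), \(b \mapsto a^q\), \(c \mapsto (a^*)^r\) in \(\|ac\|\cdot\|b\| \leq \|ab\|\cdot\|b^*c\|\) one gets \(\| a^p (a^*)^r \| \cdot \| a^q \| \leq \| a^{p+q} \| \cdot \| (a^*)^q (a^*)^r \| = \| a^{p+q} \| \cdot \| (a^*)^{q+r} \|\); since \(\|(a^*)^m\| = \|a^m\|\), this says that a large value of \(\| a^p (a^*)^r \|\) forces a large value of \(\| a^{p+q} \|\) provided \(\| a^q \|\) is reasonably large. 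Bootstrapping from \(\| a^t \| \geq \sqrt{n}\) (which follows from \(\|a^t (a^*)^t\| \geq \|b^{2t}\|/\text{const}\)-type reasoning, or directly from Lemma~\ref{commInd}-style manipulations without commutativity via Theorem~\ref{ruzsa} with \(b = a^t\)), one shows by induction that \(\| a^{m t} \| \geq n^{1 - 1/2^m}\), exactly as in the proof of Proposition~\ref{commBound} but now \emph{without} needing \(aa^* = a^*a\), because Ruzsa's inequality is unconditional. Choosing \(m = O(\log\log n)\) makes \(\| a^{mt} \| > n/2\), whence \(a^{2mt} = \Omega \times \Omega\) and \(\diamd(a) \leq 2mt = O(D \log n \cdot \log\log n)\).

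This already beats the claimed bound, which suggests the intended argument is slightly different — more likely the logarithmic losses are apportioned as follows: one iteration of expansion costs a factor \(O(D)\) per "doubling of a fixed fraction" only near the \(n/2\) threshold, and below that threshold the expansion from Proposition~\ref{expand} is weaker, costing \(O(D \log n)\) to go from a single vertex up to half the space. So \(\| b^t \| = n\) with \(t = O(D \log n)\), giving \(\| a^i (a^*)^j \| = n\) with \(i + j = O(D \log n)\). The Ruzsa bootstrapping then needs the seed \(\| a^s \| \geq \sqrt{n}\) for some \(s = O(D \log n)\), and the doubling recursion \(\| a^{2^k s} \| \geq n^{1 - 2^{-k}}\) terminates after \(k = O(\log\log n)\) steps, yielding \(\diamd(a) = O(D \log n \cdot \log \log n)\). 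To land on exactly \(O(D^2 (\log n)^2)\) one would be more generous and avoid the \(\log\log n\) refinement, accepting a second factor of \(D \log n\) somewhere (e.g.\ squaring rather than carefully bootstrapping), which is harmless for the stated theorem.

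The step I expect to be the main obstacle is producing the seed inequality \(\| a^s \| \geq \sqrt{n}\) for a short word \(a^s\) in \(a\) alone (no transposes), starting only from the fact that a short \emph{symmetric} word fills \(\Omega \times \Omega\): getting from \(\| a^i (a^*)^j \| = n\) to \(\| a^{i+j} \| \geq \sqrt{n}\) is precisely where Theorem~\ref{ruzsa} must be invoked with care about which relations are regular (all of \(S^\cup\) are, so this is available), and where one must be sure the \(\|c\| = \|c^*\|\) symmetry and \(1_\Omega \subseteq a\) normalization are set up so the inductive product expansions actually close. After that, the doubling recursion and the final pigeonhole step (\(\|d\| > n/2 \Rightarrow dd = \Omega \times \Omega\)) are routine.
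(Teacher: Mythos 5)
Your overall plan (Proposition~\ref{expand} for expansion of the symmetrization, Theorem~\ref{ruzsa} to get rid of the transposes, pigeonhole at the end) uses the right ingredients, but the central step does not work as written, and it is exactly the step you flag as the ``main obstacle''. From \( (a \cup a^* \cup 1_\Omega)^{2t} = \Omega \times \Omega \) you cannot conclude that \( \| a^i (a^*)^j \| = n \) (or is even polynomially large) for some \( i + j \le 2t \): expanding the bracket produces the union of \emph{all} interleaved words \( a^{i_1}(a^*)^{j_1}a^{i_2}(a^*)^{j_2}\cdots \), and sorting such a word into the form \( a^i(a^*)^j \) is precisely what requires \( aa^* = a^*a \); that rearrangement is the whole content of the commutative case, Proposition~\ref{commBound}. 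Pigeonholing over the exponentially many interleaved words only gives some word of norm \( \ge n/3^{2t} \), which is useless. Consequently the seed \( \| a^s \| \ge \sqrt{n} \) for some short \( s \), on which your entire \( n^{1-1/2^m} \) bootstrap rests, is never established, and neither is \( \|a^t(a^t)^*\| \) being large; the ``\( \|a^t(a^*)^t\| \ge \|b^{2t}\|/\mathrm{const} \)-type reasoning'' you invoke has no justification without commutativity. The final paragraph about ``accepting a second factor of \( D\log n \) somewhere'' is not an argument.

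The paper's proof eliminates the transposes by a different mechanism, which is the missing idea. One does not symmetrize first and then unscramble; instead, for an arbitrary \( t \in S^\cup \) with \( \|t\| \le n/2 \), taking \( T = \omega t \) and applying Proposition~\ref{expand} to \( b = a \cup a^* \) gives (using \( 1_\Omega \subseteq a \), so \( a \cup a^* \subseteq aa^* \)) the inequality \( \|taa^*\|/\|t\| \ge 1 + \tfrac{1}{2d} \). Then one chooses \( k \) to be the \emph{first} index at which the growth of \( \|a^k\| \) stalls, i.e.\ \( \|a^{k+1}\| \le (1+\tfrac{1}{2d})^{1/2}\|a^k\| \); since norms are at most \( n \), such \( k = O(d\log n) \) exists. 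Ruzsa's inequality with middle relation \( a^k \) gives \( \|taa^*\|\,\|a^k\| \le \|ta^{k+1}\|\,\|(a^k)^*a^*\| = \|ta^{k+1}\|\,\|a^{k+1}\| \), hence \( \|ta^{k+1}\|/\|t\| \ge (1+\tfrac{1}{2d})^{1/2} \) for every small \( t \). Iterating with \( t = a^{j(k+1)} \) for \( j = 1, 2, \dots \) reaches \( \|a^{m(k+1)}\| > n/2 \) after \( m = O(d\log n) \) steps, and then \( a^{2m(k+1)} = \Omega\times\Omega \), giving \( O(d^2(\log n)^2) \). Without this ``stalling index'' device (or some substitute for producing a pure power of \( a \) with large norm), your proposal does not close.
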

\begin{proof}
	Without loss of generality we may assume that \( 1_\Omega \subseteq a \).
	Let \( b = a \cup a^* \) be the symmetrization of \( a \).

	Suppose \( t \in S^\cup \) is such that \( \| t \| \leq n/2 \). Choose an arbitrary point \( \omega \in \Omega \), and set \( T = \omega t \).
	For brevity, let \( d = \diam(a) = \diam(b) \). By Proposition~\ref{expand},
	\[ \frac{|\partial_b(T)|}{|T|} \geq \frac{2}{2d + 1} \geq \frac{1}{2d}. \]
	As \( 1_\Omega \subseteq a \), we have \( T \subseteq Ta \).
	Now, \( T \cup \partial_b(T) = Ta \cup Ta^* \), and since \( T \) and \( \partial_b(T) \) are disjoint, we have
	\( |T| + |\partial_b(T)| = |Ta \cup Ta^*| \). As \( a \cup a^* \subseteq aa^* \), we derive \( |Ta \cup Ta^*| \leq |Taa^*| \).
	Since all relations of a scheme are regular, we have \( \| taa^* \| = |\omega taa^*| \) and \( \| t \| = |\omega t| \), hence
	\[ \frac{\| taa^* \|}{\| t \|} = \frac{|\omega taa^*|}{|\omega t|} = \frac{|Taa^*|}{|T|} \geq \frac{|T| + |\partial_b(T)|}{|T|} \geq 1 + \frac{1}{2d}. \tag{\(\star\)} \]

	Let \( k \) be the smallest positive integer such that \( \| a^{k+1} \| \leq (1 + \frac{1}{2d})^{1/2} \cdot \| a^k \| \).
	Notice that \( k = O(d \log n) \), indeed, if \( \| a^{m+1} \| > (1 + \frac{1}{2d})^{1/2} \| a^m \| \) for all \( m < k \),
	then \( \| a^{k} \| > (1 + \frac{1}{2d})^{(k-1)/2} \). Hence \( n > (1 + \frac{1}{2d})^{(k-1)/2} \) and \( k < 1 + 4d \log n \). 

	We now apply Ruzsa's triangle inequality for schemes to obtain
	\[ \| taa^* \| \cdot \| a^k \| \leq \| taa^k \| \cdot \| (a^k)^* a^* \|, \]
	and after dividing by \( \| t \| \cdot \| a^k \| \) we get:
	\[ \frac{\| taa^* \|}{\| t \|} \leq \frac{\| ta^{k+1} \|}{\| t \|} \cdot \frac{\|a^{k+1} \|}{\| a^k \| } \leq
	   \frac{\| ta^{k+1} \|}{\| t \|} \cdot \left( 1 + \frac{1}{2d} \right)^{1/2}. \]
	This inequality together with~(\(\star\)), implies that for any \( t \in S^\cup \) with \( \| t \| \leq n/2 \) we have
	\[ \frac{\|ta^{k+1} \|}{\| t \| } \geq \left( 1 + \frac{1}{2d} \right)^{1/2}. \]
	We repeatedly apply the inequality above for \( t \) equal to \( a^{k+1} \), \( a^{2(k+1)} \) etc., so if
	\( \| a^{l(k+1)} \| \leq n/2 \) for some \( l \), then
	\[ \| a^{(l+1)(k+1)} \| \geq \left( 1 + \frac{1}{2d} \right)^{(l+1)/2}. \]
	Therefore for some \( m = O(d \log n) \) we have \( \| a^{m(k+1)} \| > n/2 \). As we noted earlier, this implies \( a^{2m(k+1)} = \Omega \times \Omega \).
	Therefore \( \diamd(a) \leq 2m(k+1) = O(d^2 (\log n)^2) \) proving the claim.
\end{proof}

If \( \Gamma \) is a connected vertex-transitive graph on \( \Omega \), then as mentioned in Section~\ref{pre},
its full automorphism group induces a scheme on \( \Omega \) such that \( \Gamma \) is a relation of that scheme.
This immediately implies that the proposition above applies to \( \Gamma \) and hence Theorem~\ref{main} follows.

It is interesting to know whether one can lower the exponent of \( \diam(\Gamma) \) in the upper bound. To this end, Babai states the following problem:
\medskip

\noindent
\cite[Conjecture~6.3]{euler} \emph{Given a connected vertex-transitive graph \( \Gamma \) on \( n \) vertices, do we have \( \diamd(\Gamma) = O(\diam(\Gamma) (\log n)^C) \) for some
universal constant \( C \)?}
\medskip

As far as the author is aware, this is not known even for Cayley graphs. Note that if one drops vertex-transitivity,
then the directed diameter can grow quadratically in terms of the undirected diameter: for every \( d \geq 1 \)
Chv\'atal and Thomassen~\cite[Theorem~4]{chTh} constructed an undirected graph of diameter \( d \) such that every orientation of the graph
has directed diameter at least \( d^2/2 + d \).

\section{Negative results}\label{neg}

It is interesting whether one really needs the logarithmic factor in the upper bound on the directed diameter.
Indeed, Babai asks
\medskip

\noindent
\cite[Problem~6.4]{euler} \emph{Is it possible to bound the directed diameter of a vertex-transitive graph
in terms of its undirected diameter only?}
\medskip

We give a negative answer to that question. The crux of the argument depends on the following construction of Haight and Ruzsa~\cite{haight, ruzsa}.
Given a subset \( A \) of some abelian group written additively, let \( k \cdot A \) denote the \( k \)-fold sumset \( A + \dots + A \). 

\begin{lemma}[\cite{ruzsa}]\label{hruz}
	For any fixed \( k \geq 1 \) there exists \( \alpha_k < 1 \) such that
	for all sufficiently large integer \( q \) there is a subset \( A \subseteq \mathbb{Z}_q \)
	with \( A - A = \mathbb{Z}_q \) and \( |k \cdot A| < q^{\alpha_k} \).
\end{lemma}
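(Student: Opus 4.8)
The plan is to split the statement into a soft ``bootstrapping'' step and a genuinely hard combinatorial core, and to reduce the whole lemma — uniform in \( q \) — to the existence of just one good example per \( k \). Concretely, I claim it is enough to produce, for each fixed \( k \), a \emph{single} modulus \( N \) (which may be taken as large as one pleases) together with a set \( A_0 \subseteq \mathbb{Z}_N \) such that \( A_0 - A_0 = \mathbb{Z}_N \) and \( |k\cdot A_0| \le N^{\beta} \) for some \emph{fixed} \( \beta < 1 \). The passage from such an example to the full statement is routine; the construction of the example is the heart of the matter.

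\emph{The reduction.} Fix representatives \( A_0 \subseteq \{0,1,\dots,N-1\} \) and form the digitwise power
\[ A_0^{(r)} = \Bigl\{ \sum_{i=0}^{r-1} a_i N^i : a_i \in A_0 \Bigr\} \subseteq \{0,\dots,N^r-1\} \subseteq \mathbb{Z}_{N^r}. \]
Using only that \( A_0 - A_0 \) meets every residue class modulo \( N \), a digit-by-digit borrow-propagation argument (process a target \( n \) from the lowest base-\( N \) digit upwards, at each step realizing the current residue as a difference in \( A_0 \) and carrying the remainder) shows \( A_0^{(r)} - A_0^{(r)} = \mathbb{Z}_{N^r} \). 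On the other hand, the \( i \)-th base-\( N \) digit of any \( k \)-fold sum of elements of \( A_0^{(r)} \) lies in \( k\cdot A_0 \), so bounding the size-\( \le k \) carries crudely gives \( |k\cdot A_0^{(r)}| \le (C_k N^{\beta})^{r} = (N^r)^{\,\beta + \log_N C_k} \), and this exponent is \( <1 \) as soon as \( N \) is large enough in terms of \( k \) and \( \beta \). This proves the lemma for every \( q \) of the form \( N^{r} \). For an arbitrary large \( q \) I would choose \( r \) with \( N^r \le q < N^{r+1} \) and take \( A = \bigcup_{i=0}^{N}\bigl(A_0^{(r)}+iN^r\bigr) \cap \{0,\dots,q-1\} \) inside \( \mathbb{Z}_q \): the \( O(N) \) extra translates make \( A-A \) cover a block of more than \( q \) consecutive integers, hence all of \( \mathbb{Z}_q \), while \( k\cdot A \subseteq k\cdot A_0^{(r)} + N^r\{0,\dots,kN\} \) enlarges \( |k\cdot A| \) only by the constant factor \( kN+1 \), which the exponent absorbs once \( q \) is large.

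\emph{The core, and the main obstacle.} Everything now rests on exhibiting, for each \( k \), one large modulus carrying a difference cover whose \( k \)-fold sumset has density \( N^{\beta-1}=o(1) \) — this is the Haight--Ruzsa construction \cite{haight, ruzsa}, and I expect it to be the crucial and most delicate step. Nothing naive can work: in \( \mathbb{Z}_p \) with \( p \) prime a difference cover has essentially no additive structure, so one does not expect its \( k \)-fold sumset to be small for \( k \ge 2 \); and a single arithmetic progression in \( \mathbb{Z}_N \), or even a generalised arithmetic progression of bounded rank \( r \), fails as well, since covering all differences only gains a factor \( 2^r \) while a \( k \)-fold sum costs a factor \( k^r \ge 2^r \), forcing the sumset to have the same order of magnitude as the whole group. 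One is thus driven to a genuinely multi-scale object living inside \( \mathbb{Z}_{b^m} \) for a carefully chosen base \( b = b(k) \): the set \( A \) is built from digit patterns distributed over the \( m \) scales \( b^0,\dots,b^{m-1} \), correlated so that (i) differences at consecutive scales telescope onto all of \( \mathbb{Z}_{b^m} \), yet (ii) each individual \( k \)-fold sum can ``activate'' only a bounded number of scales at once, confining \( k\cdot A \) to a set of size roughly \( b^{(1-\delta)m} = N^{1-\delta} \). Calibrating the per-scale data and the base \( b \) so that (i) and (ii) hold simultaneously is precisely where the work lies and the step I would not try to improvise; here I would follow Haight's original construction in the streamlined form given by Ruzsa.
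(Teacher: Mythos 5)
The paper itself does not prove this lemma: it is quoted from Haight and Ruzsa (\cite{haight, ruzsa}) and used as a black box, so there is no internal argument to compare with. Your proposal, by your own admission, does the same for the only genuinely hard part: the existence of even one modulus \( N \) carrying a difference cover \( A_0 \) with \( |k \cdot A_0| \leq N^{\beta} \), \( \beta < 1 \), \emph{is} the Haight--Ruzsa theorem, and you defer it to those references rather than construct it. So what you have written is not a proof of the lemma but a reduction of the ``all sufficiently large \( q \)'' form to the ``some \( q \)'' form, with the core left as a citation; that matches how the paper treats the statement, but it should be recognized that the essential content remains unproved in your write-up.

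Within the reduction itself there is one genuine flaw. The digitwise-power step is fine: the borrow argument does give \( A_0^{(r)} - A_0^{(r)} = \mathbb{Z}_{N^r} \), and since each base-\( N \) digit of a \( k \)-fold sum lies in \( k \cdot A_0 + \{0, \dots, k-1\} \) (all carries are \( < k \)), one gets \( |k \cdot A_0^{(r)}| \leq (k N^{\beta})^r \), with exponent \( \beta + \log_N k < 1 \) once \( N \) is large. But the passage to arbitrary \( q \) with \( N^r \leq q < N^{r+1} \) fails as written when \( q < 3N^r \): only \( t = \lfloor q/N^r \rfloor \) of your translates \( A_0^{(r)} + iN^r \) survive the truncation to \( \{0, \dots, q-1\} \) intact (the higher ones are cut off or empty, and a truncated translate no longer has full difference set), so the integer differences you actually control are \( \{ mN^r + d : |m| \leq t-1,\ d \in A_0^{(r)} - A_0^{(r)} \} \), which cover only the interval \( [-(t-1)N^r, (t-1)N^r] \) of length \( 2(t-1)N^r + 1 \); this is smaller than \( q \) when \( t \leq 2 \), so for \( q \) just above a power of \( N \) the claim that \( A - A \) covers a block of more than \( q \) consecutive integers is unjustified. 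The repair is routine --- take \( r \) with \( N^{r+1} \leq q < N^{r+2} \), so that \( t \geq N \geq 3 \), the exponent loss being absorbed for large \( q \) --- but as stated the interpolation step has a gap, and the heart of the lemma still rests entirely on the cited construction, which neither you nor the paper reproduces.
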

\begin{corollary}
	For any \( k \) there exists a Cayley graph over an abelian group with undirected diameter
	at most \( 2 \) and directed diameter at least \( k \).
\end{corollary}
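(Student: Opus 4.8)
The plan is to use Lemma~\ref{hruz} directly: fix $k$, take $q$ large enough, and let $A \subseteq \mathbb{Z}_q$ be the set provided by the lemma, so that $A - A = \mathbb{Z}_q$ while $|k \cdot A| < q^{\alpha_k}$ for some $\alpha_k < 1$. The Cayley graph we want is $\Gamma = \mathrm{Cay}(\mathbb{Z}_q, A)$, but to be careful about conventions I would actually work with the connection set $S = A \cup (-A)$ for the \emph{undirected} distance and with $A$ itself for the \emph{directed} distance; since $\mathbb{Z}_q$ is abelian, the full automorphism group contains all translations, so $\Gamma$ is vertex-transitive (indeed a Cayley graph over an abelian group, as required).

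First I would verify the undirected diameter bound. In the symmetrized graph, the neighborhood of $0$ together with $0$ itself is $\{0\} \cup A \cup (-A)$, and the set of vertices at undirected distance at most $2$ from $0$ contains $A - A = \mathbb{Z}_q$. Hence every vertex is within undirected distance $2$ of $0$, and by vertex-transitivity $\diam(\Gamma) \leq 2$. (If one prefers the directed-graph formulation from Section~\ref{pre}, this says $(A \cup (-A) \cup \{0\})^2 = \mathbb{Z}_q$, using additive notation for the relation product.)

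Next I would bound the directed diameter from below. The set of vertices reachable from $0$ by a directed path of length at most $k$ is exactly $\{0\} \cup A \cup (2 \cdot A) \cup \dots \cup (k \cdot A)$, which has size at most $1 + |A| + |2\cdot A| + \dots + |k \cdot A| \leq (k+1)\, q^{\alpha_k}$, since $|j \cdot A| \leq |k \cdot A| < q^{\alpha_k}$ for $j \leq k$ (monotonicity of sumsets, as $0 \in A$ may be assumed, or else absorb a constant factor). For $q$ sufficiently large we have $(k+1)\, q^{\alpha_k} < q$, so not every vertex of $\mathbb{Z}_q$ is reachable from $0$ within $k$ steps; therefore $\diamd(\Gamma) \geq k+1 > k$. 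Finally I should check that $\Gamma$ is connected, i.e. that $A$ generates $\mathbb{Z}_q$: this follows since $A - A = \mathbb{Z}_q$ forces $A$ to meet every coset of any proper subgroup's worth of structure — concretely, $\langle A \rangle \supseteq A - A = \mathbb{Z}_q$.

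The only real subtlety is matching conventions between the additive setting of Lemma~\ref{hruz} (difference sets $A-A$, sumsets $k \cdot A$) and the directed-graph notions of Section~\ref{pre}, together with the mild point that one may freely assume $0 \in A$ (replacing $A$ by a translate changes neither $A - A$ nor the size of $k \cdot A$, and changes the Cayley graph only by an isomorphism). Once the dictionary is set up, the argument is a two-line counting bound and there is no genuine obstacle; the entire content is carried by the Haight–Ruzsa construction.
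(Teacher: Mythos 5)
Your proof is correct and is exactly the argument the paper intends (the corollary is stated without proof as an immediate consequence of Lemma~\ref{hruz}): symmetrizing gives $A-A=\mathbb{Z}_q$ within two steps, while the vertices reachable from $0$ in at most $k$ directed steps lie in $\{0\}\cup A\cup\dots\cup(k\cdot A)$, a set of size at most $(k+1)q^{\alpha_k}<q$ for large $q$. Your handling of the conventions (shifting so $0\in A$, monotonicity of the sumsets, connectivity from $\langle A\rangle=\mathbb{Z}_q$) is sound, so there is nothing to add.
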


A weaker version of the previous question was also suggested in~\cite{euler}:
\medskip

\noindent
\cite[Problem~6.5]{euler} \emph{Does there exist a bound on the length of the shortest directed cycle in a vertex-transitive digraph,
depending only on the undirected diameter?}%
\medskip

The answer to this question is also negative, but we need a more involved construction.

Note that in the setting of Lemma~\ref{hruz} we can always replace \( A \) by a shift \( x + A = \{ x + a \mid a \in A \} \) where \( x \in \mathbb{Z}_q \).
Indeed, \( (x+A) - (x+A) = A - A = \mathbb{Z}_q \) and \( k \cdot (x+A) = k\cdot x + k \cdot A \),
so \( |k \cdot (x+A)| = |k \cdot A| < q^{\alpha_k} \). In particular, we can always assume that \( 0 \in A \).

\begin{proposition}\label{girthEx}
	For any \( k \geq 1 \) there exists a Cayley graph over an abelian group with undirected diameter at most~\( 2 \)
	such that the length of the shortest directed cycle is at least \( k \).
\end{proposition}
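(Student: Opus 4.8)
The plan is to build on Lemma~\ref{hruz} and the shift trick, but now I need a set $A \subseteq \mathbb{Z}_q$ which not only has $A - A = \mathbb{Z}_q$ (to force undirected diameter $\leq 2$) but also \emph{no short additive relation among its elements with nonnegative coefficients summing to a positive number}, since such a relation is exactly what produces a short directed cycle in the Cayley graph $\mathrm{Cay}(\mathbb{Z}_q, A)$. Concretely, a closed directed walk of length $\ell$ from $0$ corresponds to a choice $a_1, \dots, a_\ell \in A$ with $a_1 + \dots + a_\ell = 0$ in $\mathbb{Z}_q$; if $0 \in A$ we may also pad with zeros, so the shortest directed cycle has length $\geq k$ iff there is no nontrivial relation $a_1 + \dots + a_\ell = 0$ with $1 \leq \ell < k$ and the $a_i \in A \setminus \{0\}$ (equivalently, $\ell \cdot A$ avoids $0$ for the relevant $\ell$, once we are careful about which representative of $0$ we mean).

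First I would fix $k$ and invoke Lemma~\ref{hruz} to get, for large $q$, a set $B \subseteq \mathbb{Z}_q$ with $B - B = \mathbb{Z}_q$ and $|k \cdot B| < q^{\alpha_k}$ with $\alpha_k < 1$; by the shift remark I may assume $0 \in B$. The key idea is that $k \cdot B$ is a small set (size $< q^{\alpha_k} = o(q)$), so "most" elements $x \in \mathbb{Z}_q$ do not lie in $k \cdot B$. I want to pass to a shifted copy $A = x + B$ for a cleverly chosen $x$ so that $\mathbb{Z}_q$ is avoided in the right sense. The subtlety is that shifting changes sums: $\ell \cdot A = \ell x + \ell \cdot B$, so the condition "$0 \notin \ell \cdot A$" becomes "$-\ell x \notin \ell \cdot B$". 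For this to hold simultaneously for all $\ell = 1, \dots, k-1$ I need a single $x$ with $-\ell x \notin \ell \cdot B$ for every such $\ell$; the number of "bad" $x$ for a fixed $\ell$ is at most $|\ell \cdot B|/\gcd(\ell,q) \cdot \gcd(\ell,q) = |\ell \cdot B| \leq |k\cdot B|$ (more carefully, $\{x : \ell x \in -\ell\cdot B\}$ has size at most $\gcd(\ell,q)\cdot|\ell\cdot B|$ — here I would instead choose $q$ prime so that $\gcd(\ell,q)=1$ for $1\le \ell<k$, making this set have size exactly $|\ell\cdot B| \le q^{\alpha_k}$), so the total number of bad $x$ over all $\ell < k$ is at most $(k-1) q^{\alpha_k} < q$ for $q$ large. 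Hence a good $x$ exists. Then set $A = x + B$: it still satisfies $A - A = \mathbb{Z}_q$ (shift-invariant), so $\diam(\mathrm{Cay}(\mathbb{Z}_q, A)) \leq 2$, while no directed cycle of length less than $k$ exists by construction, and I also need to make sure $0 \notin A$ or handle the padding — I would additionally require $x \notin -B$, which rules out only $|B| \leq q^{\alpha_k}$ more values and is still compatible with the counting; this guarantees $0 \notin A$ so that the "no padding with zeros" subtlety disappears and a length-$\ell$ closed walk genuinely needs $\ell$ nonzero summands from $A$.

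To finish, I would verify that $\mathrm{Cay}(\mathbb{Z}_q, A)$ is connected (it is, since $A - A = \mathbb{Z}_q$ forces $A$ together with its inverses to generate, and in fact the undirected diameter bound already implies connectivity of the symmetrization; for the directed graph to have any directed cycle at all one notes $\mathbb{Z}_q$ is finite so some power of $A$ contains $0$, which just says the girth is finite), observe it is a Cayley graph over the abelian group $\mathbb{Z}_q$, and record that its shortest directed cycle has length $\geq k$. The main obstacle I anticipate is the bookkeeping around the representative of $0$ and the $\gcd(\ell, q)$ factors in counting bad shifts — taking $q$ prime (the Haight--Ruzsa construction can be arranged, or one simply restricts the "sufficiently large $q$" to primes, possibly at the cost of adjusting $\alpha_k$ slightly) cleanly sidesteps this, so the real content is just the pigeonhole argument that a simultaneously-good shift $x$ exists because $k \cdot B$ is subpolynomially small.
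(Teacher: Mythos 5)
Your proof is correct, and it takes a genuinely different and in fact more economical route than the paper's. The paper keeps the Haight--Ruzsa set fixed (with \( 0 \in A \)), uses the same kind of union bound (over the bad sets \( P_i = \{ x : i x \in k \cdot A \} \), with \( q \) prime) to find \( x \neq 0 \) with \( \{x, 2x, \dots, kx\} \cap (k \cdot A) = \emptyset \), rescales so that \( \{1, \dots, k\} \) misses \( k \cdot A \), and then leaves \( \mathbb{Z}_q \): it builds \( \mathrm{Cay}(\mathbb{Z}_q \times \mathbb{Z}_q, B) \) with \( B = \{ (a,-1), (-1,a) \mid a \in A \} \), where the \( -1 \) entries make each coordinate of a directed cycle record how many generators of the other type were used, so a short cycle would force an element of \( \{1,\dots,k\} \) into \( k \cdot A \); this yields undirected diameter at most \( 4 \), and a final doubling step (running the argument with \( 2k \) and passing to \( 2 \cdot B \)) brings the diameter down to \( 2 \). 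You instead stay inside \( \mathbb{Z}_q \) and use the shift more aggressively: with \( q \) a large prime, you pick one \( x \) outside the union of the \( k-1 \) bad sets \( \{ x : -\ell x \in \ell \cdot B \} \), each of size \( |\ell \cdot B| \leq |k \cdot B| < q^{\alpha_k} \), so that \( 0 \notin \ell \cdot (x+B) \) for all \( 1 \leq \ell < k \); this is exactly the absence of closed directed walks, hence of directed cycles, of length less than \( k \) in \( \mathrm{Cay}(\mathbb{Z}_q, x+B) \), while \( (x+B)-(x+B) = \mathbb{Z}_q \) gives undirected diameter at most \( 2 \) directly. The pigeonhole counting is of the same nature as the paper's (both need \( \ell \) invertible mod \( q \)), but your version avoids the product group and the doubling step and produces an example on \( q \) rather than \( q^2 \) vertices. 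Two minor tidy-ups: your extra requirement \( x \notin -B \) is just the \( \ell = 1 \) case of your main condition, hence redundant; and \( |\ell \cdot B| \leq |k \cdot B| \) does not even need \( 0 \in B \), since adding \( k-\ell \) copies of any fixed element of \( B \) embeds \( \ell \cdot B \) into \( k \cdot B \).
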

\begin{proof}
	By the previous lemma, for all sufficiently large \( q \)
	there exists a subset \( A \subseteq \mathbb{Z}_q \) such that \( 0 \in A \), \( A - A = \mathbb{Z}_q \)
	and \( |k \cdot A| < q^{\alpha_k} \) where \( \alpha_k < 1 \).
	For \( x \neq 0 \) let \( I_x = \{ x, 2 \cdot x, \dots, k \cdot x \} \) be an arithmetic progression
	in \( \mathbb{Z}_q \). We claim that for \( q \) large enough it is possible to find \( x \neq 0 \) such that
	\( I_x \subseteq \mathbb{Z}_q \setminus (k \cdot A) \).

	Indeed, set \( P_i = \{ x \in \mathbb{Z}_q \mid i \cdot x \in k \cdot A \} \) for \( i = 1, \dots, k \).
	We can assume that \( q \) is a large prime number, in particular, \( \mathbb{Z}_q \) is a field with \( q > k \).
	Therefore \( |P_i| = |k \cdot A| \) and \( |P_1 \cup \dots \cup P_k| \leq k|k \cdot A| < k q^{\alpha_k} < q-1 \)
	for \( q \) large enough. Hence there exists \( x \neq 0 \) such that \( x \notin P_1 \cup \dots \cup P_k \)
	which implies \( I_x \subseteq \mathbb{Z}_q \setminus (k \cdot A) \).

	Notice that we can assume that \( \{1, \dots, k\} = I_1 \subseteq \mathbb{Z}_q \setminus (k \cdot A) \).
	Indeed, consider \( A' = \{ x^{-1}a \mid a \in A \} \). Clearly \( A' - A' = \mathbb{Z}_q \)
	and \( k \cdot A' = \{ x^{-1}a \mid a \in k \cdot A \} \), so we may replace \( A \) by \( A' \).

	We first construct a Cayley graph with undirected diameter at most \( 4 \) and the length of the shortest nontrivial directed cycle at least~\( k \).
	Let \( V = \mathbb{Z}_q \times \mathbb{Z}_q \) be the underlying abelian group
	and let \( B = \{ (a, -1),\, (-1, a) \mid a \in A \} \) be the connection set. As \( A - A = \mathbb{Z}_q \),
	the difference \( B - B \) contains \( (\mathbb{Z}_q, 0) = \{ (x, 0) \mid x \in \mathbb{Z}_q \} \) and similarly
	\( (0, \mathbb{Z}_q) \). Now \( V = (\mathbb{Z}_q, 0) + (0, \mathbb{Z}_q) \), so \( B - B + B - B = V \)
	and the undirected diameter of the Cayley graph \( \mathrm{Cay}(V, B) \) is at most~\( 4 \).

	Consider a nontrivial directed cycle in our Cayley graph. It involves \( n \) generators
	of the form \( (a_1, -1), \dots, (a_n, -1) \) and \( m \) generators of the form \( (-1, a'_1), \dots, (-1, a'_m) \),
	where \( a_1, \dots, a_n, a'_1, \dots, a'_m \in A \). Since it is a cycle, we have the following equalities modulo \( q \):
	\[ 
	\begin{cases}
		a_1 + \dots + a_n - m = 0,\\
		-n + a'_1 + \dots + a'_m = 0.
	\end{cases}
	\]
	Assume that \( n \leq k \) and \( m \leq k \). If \( m = 0 \), then second equality implies that \( n = 0 \) modulo \( q \),
	which is a contradiction with \( q > k \) and the fact that we consider a nontrivial cycle.
	Hence \( m \geq 1 \) and, similarly, \( n \geq 1 \). Now, \( m \in n \cdot A \) and \( 0 \in A \) implies
	\( n \cdot A \subseteq k \cdot A \), so \( m \) lies in \( k \cdot A \). Recall that the interval \( \{ 1, \dots, k \} \) lies fully outside of
	\( k \cdot A \) hence we have a contradiction. Therefore \( n \) or \( m \) is larger than \( k \) and the length of any nontrivial directed
	cycle is at least~\( k \).

	Finally, to construct the required Cayley graph of undirected diameter at most~\( 2 \), use
	the argument above with \( 2k \) in place of \( k \) to construct a Cayley graph \( \mathrm{Cay}(V, B) \), \( B \subseteq V \),
	such that \( B - B + B - B = V \) and the length of the shortest nontrivial directed cycle in the graph is at least~\( 2k \).
	Now in \( \mathrm{Cay}(V, 2 \cdot B) \) a shortest nontrivial directed cycle has length at least~\( k \),
	and the undirected diameter of the graph is at most~\( 2 \), since \( 2 \cdot B - 2 \cdot B = V \). The claim is proved.
\end{proof}

The examples above are Cayley graphs over abelian groups, so it would be interesting to know if it is possible
to construct similar examples over groups which are far from being abelian, for example, over finite simple groups.

\section{Acknowledgements}

The author would like to express his gratitude to prof.\ A.V.~Vasil'ev for numerous remarks improving the text,
and to prof.\ L.~Pyber for suggesting him the statement of Proposition~\ref{expand}.

The research was carried out within the framework of the Sobolev Institute of Mathematics state contract (project FWNF-2022-0002).

\bigskip

\noindent
\emph{Saveliy V. Skresanov}

\noindent
\emph{Sobolev Institute of Mathematics, 4 Acad. Koptyug avenue, Novosibirsk, Russia}

\noindent
\emph{Email address: skresan@math.nsc.ru}

\end{document}